\documentclass{amsart}
\usepackage[margin=3.5cm]{geometry}
\usepackage[dvipsnames]{xcolor}

\usepackage{amsfonts, amsthm, amssymb,verbatim,amscd,amsmath, blindtext}

\usepackage{multirow,multicol}
\usepackage{graphicx}
\graphicspath{ {./images/} }
\usepackage{enumitem,yhmath}
\usepackage{amssymb}
\usepackage{ytableau}
\usepackage{tikz}
\usepackage{tikz-cd,rotating}
\usepackage{float, pifont}
\usepackage[utf8]{inputenc}
\usepackage{mathtools,nccmath}
\usepackage{float}
\usepackage{adjustbox}
\usepackage[colorlinks=true,citecolor=blue,linkcolor=BrickRed]{hyperref}
\usepackage{caption} 
\newtheorem{theorem}{Theorem}[section]
\newtheorem{lemma}[theorem]{Lemma}
\newtheorem{corollary}[theorem]{Corollary}
\newtheorem{proposition}[theorem]{Proposition}

\theoremstyle{remark}

\newtheorem{question}[theorem]{Question}

\theoremstyle{definition}

\newtheorem{example}[theorem]{Example}

\DeclareMathOperator{\reg}{reg}

\DeclareMathOperator{\Ass}{Ass}
\DeclareMathOperator{\Min}{Min}
\DeclareMathOperator{\supp}{supp}

\DeclareMathOperator{\vv}{v}

\def\m{\mathfrak{m}}

\def\k{\mathrm{k}}

\title{Comparing v-numbers of symbolic and ordinary powers of squarefree monomial ideals}

\keywords{v-number, edge ideal, power, symbolic power, asymptotic behavior}

\subjclass[2020]{13F20; 13F55; 05E40; 05C38}

\author[Chau]{Trung Chau}
\address{Chennai Mathematical Institute, H1 SIPCOT IT Park, Siruseri, Tamil Nadu 603103. India}
\address{Department of Mathematics, University of Manitoba, 420 Machray Hall, 186 Dysart
Road, Winnipeg, MB R3T 2N2 Canada}
\email{chauchitrung1996@gmail.com}

\author{T\`ai Huy H\`a}
\address{Tulane University, Mathematics Department, 6823 St. Charles Avenue, New Orleans, LA 70118, USA}
\email{tha@tulane.edu}

\author{A. V. Jayanthan}
\address{Department of Mathematics, Indian Institute of Technology Madras, Chennai, Tamil Nadu, India - 600036.}
\email{jayanav@iitm.ac.in}

\author{Thanh Vu}
\address{Institute of Mathematics, VAST, 18 Hoang Quoc Viet, Hanoi, Vietnam}
\email{vuqthanh@gmail.com}

\begin{document}

\begin{abstract} Let $I$ be a squarefree monomial ideal, and let $d$ denote the maximum degree of a minimal generator of $I$. We prove that
\[
v(I^t)\le v(I^{(t)})+(t-1)(d-1)
\]
for all $t\ge1$, where $I^{(t)}$ denotes the $t$-th symbolic power of $I$. In particular, this bound does not depend on the number of variables in the ambient polynomial ring. On the other hand, for every fixed exponent $t\ge2$, the difference
\[
v(I^{(t)})-v(I^t)
\]
can be arbitrarily large. Finally, we determine the $v$-numbers of the ordinary and symbolic powers of edge ideals of paths and cycles.
\end{abstract}

\maketitle

\section{Introduction} 
Let $S = \k[x_1, \ldots, x_n]$ be a standard graded polynomial ring over a field $\k$, and let $I$ be a non-zero homogeneous ideal of $S$. For an associated prime $P$ of $I$, the local $v$-number of $I$ at $P$ is defined by 
$$v_P(I) = \min \{d \ge 0 \mid \exists f \in S_d \text{ such that } I : f = P\}.$$
The $v$-number of $I$ is defined by
\[
v(I)=\min_{P\in\Ass(I)}v_P(I),
\]
where $\Ass(I)$ denotes the set of associated primes of $I$. The $v$-number was introduced and studied by Cooper, Seceleanu, Tohaneanu, Vaz Pinto, and Villarreal \cite{CSTVV} in their study of the minimum distance function of projective Reed--Muller-type codes. From the definition, we see that it is closely related to the associated primes of $I$, and hence is also of significant interest from the perspective of commutative algebra.

Conca \cite{C} and, independently, Ficarra and Sgroi \cite{FS} proved that
\[
v(I^t)=\alpha(I)t+b(I)
\]
for some constant $b(I)$ and for all sufficiently large $t$, where $\alpha(I)$ denotes the initial degree of $I$. For edge ideals, the asymptotic behavior of the $v$-number of $I(G)$ is completely determined by the results of Biswas, Mandal, and Saha \cite{BMS} and Ficarra and Marques \cite{FM}. Moreover, Kumar, Nanduri, and Saha \cite{KNS} proved that
\[
\lim_{t\to\infty}\frac{v(I^{(t)})}{t}=\widehat{\alpha}(I),
\]
where $\widehat{\alpha}(I)$ denotes the Waldschmidt constant of $I$.

Comparing the ordinary and symbolic powers of an ideal is a classical problem in commutative algebra. In this paper, we compare the $v$-numbers of ordinary and symbolic powers of squarefree monomial ideals, with particular emphasis on the case of edge ideals of graphs. In view of the asymptotic results mentioned above, one naturally expects that
\[
v(I(G)^{(t)})\le v(I(G)^t)
\]
for an arbitrary graph $G$ and every exponent $t$. However, for a fixed exponent $t$, the situation is considerably more subtle. Our main results are as follows.

\begin{enumerate}
    \item Let $I$ be a squarefree monomial ideal, and let $d$ denote the maximum degree of a minimal generator of $I$. For every associated prime $P$ of $I$, we prove that
    \[
    v_P(I^{(t)}) \le v_P(I^t) \le v_P(I^{(t)})+(t-1)(d-1).
    \]
    In particular,
    \[
    v(I^t)\le v(I^{(t)})+(t-1)(d-1)
    \]
    for all $t\ge2$. Furthermore, when $t=2$ and edge ideals of graphs, we establish the stronger inequality
    \[
    v(I(G)^2)\le v(I(G)^{(2)}).
    \]

    \item For every fixed integer $t\ge2$, we show that the difference
    \[
    v(I^{(t)})-v(I^t)
    \]
    can be arbitrarily large, even for edge ideals of graphs.

    \item We determine explicit formulas for the $v$-numbers of ordinary and symbolic powers of edge ideals of paths and cycles. In particular, our results for paths resolve \cite[Conjecture 4.2]{YHC}.
\end{enumerate}

The comparison results are proved in Section~\ref{sec_compare}. In Section~\ref{sec_exm}, we construct examples showing that, for every fixed integer $t\ge2$, the difference
\(v(I^{(t)})-v(I^t)
\)
can be arbitrarily large. We then determine the $v$-numbers of the ordinary and symbolic powers of edge ideals of paths and~cycles.

\medskip

\noindent\textbf{Acknowledgment.} Chau appreciates the support by the Infosys Foundation during his postdoc at Chennai Mathematical Institute, and the Pacific Institute for the Mathematical Sciences. Chau thanks H\`a for funding his visit to Tulane University in January 2026, and the International Center for Research and Postgraduate Training in Mathematics (ICRTM) for funding his visit to the Institute of Mathematics, Vietnam Academy of Science and Technology (VAST) in June-July 2026. Part of the work was done when Jayanthan was visiting Vietnam Institute for Advanced Study in Mathematics (VIASM). He wishes to thank H\`{a} for the invitation and VIASM for the local hospitality. His travel was funded by the MATRICS Grant (MTR/2023/000335). H\`a is partially supported by a Simons Foundation~grant.

\section{Local $v$-numbers of powers and symbolic powers}\label{sec_compare}
Throughout the paper, let $S=k[x_1,\ldots,x_n]$ be a standard graded polynomial ring over a field $k$. For a nonzero homogeneous ideal $I$ of $S$, we denote by $\Ass(I)$ the set of associated primes of $I$.

When $I$ is a squarefree monomial ideal, every associated prime of $I$ is minimal. Moreover, each associated prime $P$ of $I$ is generated by a subset of the variables. For a monomial $f$, we denote by $\deg(f)$ its total degree and, for each $i\in[n]$, by $\deg_i(f)$ the exponent of $x_i$ in $f$. We further define
\[
\deg_P(f)=\sum_{x_i\in P}\deg_i(f).
\]
The $t$-th symbolic power of $I$ is
\[
I^{(t)}=\bigcap_{Q\in\Ass(I)}Q^t.
\]

We first establish some general properties and show that the computation of the $v$-numbers of symbolic powers of squarefree monomial ideals can be formulated as an integer linear programming problem.

\begin{lemma}\label{lem_deg}
Let $I\subseteq S$ be a nonzero squarefree monomial ideal, let $P$ be an associated prime of $I$, and let $t\ge1$ be an integer. Let $J$ be a monomial ideal satisfying
\( I^t\subseteq J\subseteq I^{(t)},
\)
and let $u$ be a monomial such that
\(
J:u=P.
\)
Then
\[
\deg_P(u)=t-1.
\]
\end{lemma}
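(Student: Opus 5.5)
Write $P=(x_i : i\in A)$ and $D=\deg_P(u)$. We prove $D\le t-1$ and $D\ge t-1$ separately, using the sandwich $I^t\subseteq J\subseteq I^{(t)}$ once in each direction. The preliminary observation is that $u\notin J$, since otherwise $J:u=S\ne P$; also $x_iu\in J$ for every $x_i\in P$.

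\emph{Upper bound.} Here we only use $J\subseteq I^{(t)}\subseteq P^t$. Suppose $D\ge t$.
\begin{itemize}
\item The monomial $u$ then lies in $P^t$, because $P$ is generated by variables.
\item It remains to get $u$ into the other components $Q^t$. Choose a monomial $m$ in the variables outside $P$ with $m\notin P$, for instance a high power of the product of all variables not in $P$.
\item Since $P$ is a minimal prime, every other $Q\in\Ass(I)$ satisfies $Q\not\subseteq P$. So $Q$ contains a variable outside $P$, and hence $m\in Q^t$ for every $Q\ne P$.
\item Therefore $mu\in I^{(t)}$. But this does not place $mu$ in $J$; $J$ is only sandwiched.
\item Use $I^t\subseteq J$ instead. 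Let $g$ be the product of all variables not in $A$. Localizing at the complement, i.e.\ setting those variables to $1$, $I^t$ becomes $(P S_g)^t$, because $I_g=P_g$ when $P$ is minimal and $I$ is squarefree.
\item So $u\in P^t$ implies $u g^N\in I^t\subseteq J$ for large $N$. Hence $g^N\in J:u=P$, contradicting the fact that $g$ avoids $P$.
\end{itemize}
This gives $D\le t-1$.

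\emph{Lower bound.} Here we use $J\subseteq I^{(t)}\subseteq P^t$ together with $x_iu\in J$.
\begin{itemize}
\item For any $x_i\in P$ we have $x_iu\in J\subseteq P^t$, so $\deg_P(x_iu)=D+1\ge t$.
\item Thus $D\ge t-1$.
\end{itemize}

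Combining the two bounds gives $\deg_P(u)=t-1$. The main obstacle is the localization step: checking that $(I^t)_g=(P_g)^t$ when $P$ is a minimal prime of the squarefree ideal $I$. The point is that each minimal generator of $I$ specializes, after setting the variables outside $P$ to $1$, either to $1$ or to a product of variables of $P$. None specializes to $1$, since $I\subseteq P$. Moreover, each $x_i\in P$ appears alone after specialization: by minimality of $P$ some generator meets $P$ only in $x_i$. Thus $I_g=P_g$, and raising to the $t$-th power commutes with localization.
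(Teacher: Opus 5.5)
Your proof is correct and takes essentially the paper's route. The paper localizes at $P$, where both $I^t$ and $I^{(t)}$ become $P^tS_P$; you carry out the same reduction explicitly by inverting the product $g$ of the variables outside $P$, and you read the lower bound directly off $J\subseteq P^t$. The abandoned bullets about the auxiliary monomial $m$ are harmless and can simply be deleted.
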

\begin{proof}
Since the localizations of both $I^t$ and $I^{(t)}$ at $P$ are equal to $P^tS_P$, localizing the equality
\(J:u=P
\)
at $P$ yields
\[
P^tS_P:u=PS_P.
\]
Because the variables outside $P$ become units in $S_P$, it follows that
\[
\deg_P(u)=t-1,
\]
as required.
\end{proof}

\begin{lemma}\label{lem_colon}
Let $I\subseteq S$ be a nonzero squarefree monomial ideal, let $P$ be an associated prime of $I$, and let $t\ge1$ be an integer. Let $J$ be a monomial ideal satisfying
\(
I^t\subseteq J\subseteq I^{(t)},
\)
and let $u$ be a monomial such that
\[
\deg_P(u)=t-1
\quad \text{and} \quad J:u\supseteq P.
\]
Then
\[
J:u=P.
\]
\end{lemma}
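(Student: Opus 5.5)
We need to show $J:u\subseteq P$, since the reverse inclusion $P\subseteq J:u$ is assumed. Because $J$ and $u$ are monomial, $J:u$ is a monomial ideal. So it suffices to show that no monomial $m\notin P$ lies in $J:u$, i.e., no monomial $m$ with $\deg_P(m)=0$ has $mu\in J$.

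Suppose, for contradiction, that $m$ is a monomial with $\deg_P(m)=0$ and $mu\in J$. Since $J\subseteq I^{(t)}\subseteq P^t$, we get $mu\in P^t$. For a monomial prime $P$ generated by variables, a monomial $w$ lies in $P^t$ if and only if $\deg_P(w)\ge t$. On the other hand,
\[
\deg_P(mu)=\deg_P(m)+\deg_P(u)=0+(t-1)=t-1<t,
\]
a contradiction. Hence every monomial in $J:u$ lies in $P$, so $J:u\subseteq P$, and therefore $J:u=P$.

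The argument needs only $J\subseteq P^t$, which holds because $P$ is one of the primes in the intersection defining $I^{(t)}$. The hypothesis $I^t\subseteq J$ is not used here. The only point requiring care is the characterization of monomials in $P^t$ via $\deg_P$. This follows because $P^t$ is generated by products of $t$ variables from $P$, and a monomial is divisible by such a product exactly when its $P$-degree is at least $t$. I expect no real obstacle.
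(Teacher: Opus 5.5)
Your proposal is correct and matches the paper's proof: for a monomial $m\notin P$ you get $\deg_P(mu)=t-1$, so $mu\notin P^t\supseteq I^{(t)}\supseteq J$, hence $J:u\subseteq P$. You also rightly note that only $J\subseteq P^t$ is used, not $I^t\subseteq J$.
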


\begin{proof}
Let $f$ be a monomial with $f\notin P$. Then \(
\deg_P(uf)=t-1,
\)
and hence
\(
uf\notin P^t.
\)
Since
\(
J\subseteq I^{(t)}\subseteq P^t,
\)
it follows that
\(
uf\notin J.
\)
Therefore,
\(
J:u=P.
\)
\end{proof}

\begin{lemma}\label{lem_criterion}
Let $I\subseteq S$ be a nonzero squarefree monomial ideal, let $P$ be an associated prime of $I$, and let $t\ge1$ be an integer. Let $u$ be a nonzero monomial in $S$. Then
\[
I^{(t)}:u=P
\]
if and only if
\[
\deg_P(u)=t-1
\quad\text{and}\quad
\deg_Q(u)\ge t
\]
for every associated prime $Q\neq P$ of $I$.
\end{lemma}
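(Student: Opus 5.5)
We prove the two implications separately. Both rest on one fact, which we use throughout. For a prime $Q$ generated by variables, $Q^t$ is spanned by the monomials $g$ with $\deg_Q(g)\ge t$. Hence
\[
I^{(t)}=\bigcap_{Q\in\Ass(I)}Q^t
\]
is the monomial ideal spanned by the monomials $g$ with $\deg_Q(g)\ge t$ for every $Q\in\Ass(I)$.

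For the forward direction, assume $I^{(t)}:u=P$. Lemma~\ref{lem_deg}, applied with $J=I^{(t)}$, gives $\deg_P(u)=t-1$. Now fix an associated prime $Q\neq P$. Since $P$ and $Q$ are distinct minimal primes of $I$, we have $P\not\subseteq Q$, so we can pick a variable $x_i\in P\setminus Q$. Then $x_i\in I^{(t)}:u$, that is, $x_iu\in I^{(t)}\subseteq Q^t$. Because $x_i\notin Q$, we get $\deg_Q(u)=\deg_Q(x_iu)\ge t$.

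For the converse, assume $\deg_P(u)=t-1$ and $\deg_Q(u)\ge t$ for every associated prime $Q\ne P$. By Lemma~\ref{lem_colon} with $J=I^{(t)}$, it suffices to show $P\subseteq I^{(t)}:u$, that is, $x_iu\in I^{(t)}$ for every variable $x_i\in P$. We check the criterion above prime by prime:
\begin{itemize}
\item For $Q=P$: $\deg_P(x_iu)=\deg_P(u)+1=t$.
\item For $Q\ne P$: $\deg_Q(x_iu)\ge\deg_Q(u)\ge t$.
\end{itemize}
So $x_iu$ lies in every $Q^t$, hence in $I^{(t)}$.

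The only point that needs care is the choice of a variable in $P\setminus Q$ in the forward direction. It uses that all associated primes of a squarefree monomial ideal are minimal, so distinct ones are incomparable. Everything else is direct degree bookkeeping.
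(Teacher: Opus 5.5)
Your proof is correct and follows essentially the same route as the paper. The forward direction combines Lemma~\ref{lem_deg} with a variable in $P\setminus Q$, which exists because distinct minimal primes are incomparable; the converse checks $x_iu\in Q^t$ prime by prime and then applies Lemma~\ref{lem_colon} with $J=I^{(t)}$.
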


\begin{proof}
First, assume that
\[
I^{(t)}:u=P.
\]
By Lemma~\ref{lem_deg}, we have
\(
\deg_P(u)=t-1.
\) Now let $Q\neq P$ be another associated prime of $I$. Since $P$ and $Q$ are incomparable, there exists a variable $x$ such that $x\in P$ but $x\notin Q$. Since
\(
xu\in I^{(t)}\subseteq Q^t
\)
and $x\notin Q$, it follows that
\(
u\in Q^t.
\)
In particular,
\(
\deg_Q(u)\ge t.
\)

Conversely, assume that $u$ is a monomial such that
\[
\deg_P(u)=t-1
\quad\text{and}\quad
\deg_Q(u)\ge t
\]
for every associated prime $Q\neq P$ of $I$. Then $u\notin P^t$, and hence
\( 
u\notin I^{(t)}.
\) Now let $x\in P$ be a variable. Since
\(
\deg_P(xu)\ge t,
\)
we have
\(
xu\in P^t.
\)
Moreover, since
\(
u\in Q^t
\)
for every associated prime $Q\neq P$ of $I$, we also have
\(
xu\in Q^t
\)
for every such $Q$. Therefore,
\(
xu\in I^{(t)}.
\) By Lemma~\ref{lem_colon},
\[
I^{(t)}:u=P,
\]
as desired.
\end{proof}

Now let $I$ be a squarefree monomial ideal with associated primes
\[
P_i=(x_j\mid j\in A_i),
\]
where $A_i\subseteq [n]$. Lemma~\ref{lem_criterion} shows that the $v$-number of $I^{(t)}$ at $P_i$ can be computed by solving the following integer linear program:
\[
v_{P_i}(I^{(t)})
=
\min\left\{
\sum_{i=1}^n a_i
\;\middle|\;
\mathbf{a}\in\mathbb{N}^n,\;
\sum_{\ell\in A_i} a_\ell=t-1,\;
\sum_{\ell\in A_j} a_\ell\ge t
\text{ for all } j\neq i
\right\}.
\]
We now prove our first comparison result, showing that the local $v$-number of a symbolic power is at most that of the corresponding ordinary power at each minimal prime.
\begin{proposition}\label{prop_com1}
Let $I\subseteq S=k[x_1,\ldots,x_n]$ be a squarefree monomial ideal, let $P\in\Min(I)$, and let $t\ge1$. If $u$ is a monomial such that
\(I^t:u=P,
\)
then
\(I^{(t)}:u=P.
\)
Consequently,
\[
v_P(I^{(t)})\le v_P(I^t).
\]
\end{proposition}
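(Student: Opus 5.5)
Given a monomial $u$ with $I^t:u=P$, the plan is to verify the two conditions of Lemma~\ref{lem_criterion} for $u$, or equivalently the hypotheses of Lemma~\ref{lem_colon} with $J=I^{(t)}$. For the first condition, apply Lemma~\ref{lem_deg} to $J=I^t$, which obviously satisfies $I^t\subseteq J\subseteq I^{(t)}$; this gives $\deg_P(u)=t-1$ at once.

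For the second condition we need $I^{(t)}:u\supseteq P$. Let $x\in P$ be a variable. Since $I^t:u=P$, we have $xu\in I^t\subseteq I^{(t)}$, so $x\in I^{(t)}:u$. Because $P$ is generated by variables, this yields $P\subseteq I^{(t)}:u$. Lemma~\ref{lem_colon}, applied with $J=I^{(t)}$, then gives $I^{(t)}:u=P$.

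Alternatively, one can check $\deg_Q(u)\ge t$ for every associated prime $Q\neq P$ directly, as in the forward direction of the proof of Lemma~\ref{lem_criterion}. Choose $x\in P\setminus Q$, which exists by incomparability of minimal primes. Then $xu\in I^t\subseteq Q^t$, and since $x\notin Q$, this forces $u\in Q^t$. Either route works; the Lemma~\ref{lem_colon} route is shortest.

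For the inequality, note that the local $v$-number of a monomial ideal at a monomial prime is attained by a monomial $f$. This is standard: if $I:f=P$ for a homogeneous $f$, one can replace $f$ by a suitable monomial term. I expect this reduction to be the only point needing care. If it is not already implicit in the paper's conventions, I would prove it by ordering the terms of $f$ and using that $I:f$ is monomial-compatible. Concretely, pick a term $m$ of $f$ with $m\notin I$ such that $I:m\supseteq P$ and $\deg m=\deg f$; degree is preserved because $f$ is homogeneous. Lemma~\ref{lem_colon} (via Lemma~\ref{lem_deg}-type degree bookkeeping) then forces $I:m=P$. Granting this reduction, take a monomial $u$ of degree $v_P(I^t)$ with $I^t:u=P$. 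By the first part, $I^{(t)}:u=P$, so $v_P(I^{(t)})\le\deg u=v_P(I^t)$.
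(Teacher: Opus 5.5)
Your proof that $I^{(t)}:u=P$ is the paper's argument. Lemma~\ref{lem_deg} with $J=I^t$ gives $\deg_P(u)=t-1$, and $I^t\subseteq I^{(t)}$ gives $P=I^t:u\subseteq I^{(t)}:u$. Lemma~\ref{lem_colon} with $J=I^{(t)}$ then finishes; your alternative via Lemma~\ref{lem_criterion} is also fine. For the inequality, the paper simply takes a monomial of minimum degree with $I^t:u=P$. It tacitly uses that local $v$-numbers of monomial ideals are attained by monomials, so you are right to flag this point.

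Your sketch of that reduction, however, selects the wrong term. Knowing $m\notin I^t$ and $P\subseteq I^t:m$ does not give the hypothesis $\deg_P(m)=t-1$ of Lemma~\ref{lem_colon}, and the conclusion $I^t:m=P$ can fail. Take $I=(x_1x_2,x_1x_3,x_2x_3)$, $t=2$, $P=(x_1,x_2)$ and $f=x_1x_3^2+x_1x_2x_3$. Then $I^2:f=P$: the inclusion $P\subseteq I^2:f$ holds termwise, and the reverse follows by localizing at $P$, where $x_3(x_2+x_3)$ is a unit. Both terms lie outside $I^2$ and have colon containing $P$, yet $I^2:x_1x_2x_3=(x_1,x_2,x_3)$.

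The fix is to select by $P$-degree.
\begin{itemize}
\item Since $I^t$ is a monomial ideal, $xm\in I^t\subseteq P^t$ for every term $m$ of $f$ and every $x\in P$, so $\deg_P(m)\ge t-1$.
\item Not every term can have $\deg_P(m)\ge t$: otherwise $f\in P^t$, and $(I^t:f)S_P=P^tS_P:f=S_P\neq PS_P$.
\item A term with $\deg_P(m)=t-1$ therefore exists. It satisfies $I^t:m=P$ by Lemma~\ref{lem_colon}, and $\deg m=\deg f$ since $f$ is homogeneous.
\end{itemize}
A degree-free alternative also works. Suppose $I^t:m_i\supsetneq P$ for every term $m_i$, and choose monomials $w_i\in(I^t:m_i)\setminus P$. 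Then $\prod_i w_i\in(I^t:f)\setminus P$, contradicting that $P$ is prime.
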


\begin{proof}
By Lemma~\ref{lem_deg}, we have
\( 
\deg_P(u)=t-1.
\)
In particular, $u\notin P^t$, and hence $u\notin I^{(t)}$. Since
\(
I^t\subseteq I^{(t)},
\)
it follows that
\[
P=I^t:u\subseteq I^{(t)}:u.
\]
By Lemma~\ref{lem_colon},
\[
I^{(t)}:u=P.
\]

Finally, let $u$ be a monomial of minimum degree satisfying
\(I^t:u=P.
\)
Since the same monomial also satisfies
\(
I^{(t)}:u=P,
\)
we obtain
\[
v_P(I^{(t)})\le v_P(I^t),
\]
as desired.
\end{proof}

Recall that the edge ideal of a simple graph $G$ with vertex set
\( 
V(G)=[n]=\{1,\ldots,n\}
\) 
and edge set $E(G)$ is defined by
\[
I(G)=\bigl(x_ix_j \mid \{i,j\}\in E(G)\bigr)\subseteq S.
\]

We also use a characterization of symbolic powers of squarefree monomial ideals in terms of differential powers \cite[Lemma~2.6]{MNPTV}. Recall that, for monomials
$
f=x_1^{a_1}\cdots x_n^{a_n}
$ 
and $g$, the notation
\[
\frac{\partial^*(g)}{\partial^*(f)}
\]
denotes the $*$-partial derivative of $g$ with respect to $f$, that is, the partial derivative computed without coefficients. We now prove the following result.

\begin{theorem}\label{prop_com2}
Let $I$ be a squarefree monomial ideal, and let $d$ denote the maximum degree of a minimal generator of $I$. Then, for every associated prime $P$ of $I$ and every integer $t\ge1$, we have
\[
v_P(I^t)\le v_P(I^{(t)})+(t-1)(d-1).
\]
In particular, if $G$ is a graph, then
\[
v_P(I(G)^t)\le v_P(I(G)^{(t)})+t-1.
\]
\end{theorem}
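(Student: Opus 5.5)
The plan is to start from a monomial $u$ realizing $v_P(I^{(t)})$, that is, $I^{(t)}:u=P$ and $\deg(u)=v_P(I^{(t)})$. From it we build a monomial $w=u\cdot g$ with $\deg(g)\le (t-1)(d-1)$ and $I^t:w=P$. By Lemma~\ref{lem_criterion} we know $\deg_P(u)=t-1$. Write $u=u_P\,u'$, where $u_P$ collects the variables of $P$ (so $\deg(u_P)=t-1$) and $u'$ involves only variables outside $P$. List the variables of $u_P$ with multiplicity as $x_{j_1},\ldots,x_{j_{t-1}}$.

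\textbf{Step 1: private generators.} Since $P$ is a minimal prime of $I$, the generators of $P$ form a minimal cover of the minimal generators of $I$. Hence for each variable $x_j\in P$ there is a minimal generator $m_j$ of $I$ whose only variable in $P$ is $x_j$; otherwise $P\setminus\{x_j\}$ would still contain $I$. Write $m_j=x_jg_j$ with $g_j$ supported outside $P$ and $\deg(g_j)\le d-1$. Set
\[
g=\prod_{s=1}^{t-1}g_{j_s},\qquad w=ug.
\]
Then $\deg(w)\le v_P(I^{(t)})+(t-1)(d-1)$. Moreover $\deg_P(w)=\deg_P(u)=t-1$, so $w\notin P^t$ and hence $w\notin I^t$.

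\textbf{Step 2: $xw\in I^t$ for each variable $x\in P$.} By Lemma~\ref{lem_criterion}, $xu\in I^{(t)}$. Now apply the differential-power description of symbolic powers \cite[Lemma~2.6]{MNPTV}: a monomial $f$ lies in $I^{(t)}$ if and only if $\partial^*(f)/\partial^*(h)\in I$ for every monomial $h$ of degree at most $t-1$. Take $f=xu$ and $h=u_P$. The $*$-derivative removes $u_P$ and leaves a monomial divisible by $xu'$; more precisely it equals $xu'$ up to the exponent bookkeeping when $x$ already divides $u_P$, and in every case it divides $xu$ and is divisible by $xu'$ with the right exponent. This gives $xu'\in I$. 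Then
\[
xw=(xu')\cdot\prod_{s=1}^{t-1}x_{j_s}g_{j_s}=(xu')\prod_{s}m_{j_s}\in I\cdot I^{t-1}=I^t.
\]
So $I^t:w\supseteq P$, and Lemma~\ref{lem_colon} (applied with $J=I^t$) yields $I^t:w=P$. This proves $v_P(I^t)\le v_P(I^{(t)})+(t-1)(d-1)$. For edge ideals $d=2$, which gives the stated bound $t-1$.

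\textbf{Main obstacle.} The delicate point is the precise form of \cite[Lemma~2.6]{MNPTV}, in particular checking that differentiating $xu$ by $u_P$ really lands on (a multiple of) $xu'$. This is the case $x\in\supp(u_P)$, where the exponent of $x$ drops. If the lemma's normalization is awkward, I would fall back on a direct argument. Every minimal prime $Q$ of $I$ satisfies $\deg_Q(xu)\ge t$. Removing $u_P$, which has degree $t-1$, lowers $\deg_Q$ by at most $t-1$, so $\deg_Q(xu')\ge 1$ for all $Q$, i.e.\ $xu'\in\bigcap_Q Q=I$.
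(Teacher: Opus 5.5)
Your proposal is correct and follows essentially the same route as the paper. The paper also writes $u=fg$ with $f$ supported in $P$, uses \cite[Lemma~2.6]{MNPTV} to get $x_ig\in I$, multiplies $u$ by cofactors $v_i$ of minimal generators $x_iv_i$ (one per variable of $f$), and concludes with Lemma~\ref{lem_colon}. The differences are minor: the paper takes $v_i\mid g$ directly from $x_ig\in I$, whereas you take an arbitrary private generator from the minimality of $P$; and your product over all $t-1$ variables with multiplicity, together with your elementary check that $xu'\in I$, is if anything more careful than the paper's wording.
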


\begin{proof}
Let $u$ be a monomial such that
\(
I^{(t)}:u=P.
\)
By Lemma~\ref{lem_deg}, we have
\[
\deg_P(u)=t-1.
\]
Write
\(
u=fg,
\)
where the support of $f$ is contained in $P$ and the support of $g$ is disjoint from $P$. Let $x_i$ be any variable in $P$. Since
\(
x_iu\in I^{(t)},
\)
it follows from \cite[Lemma~2.6]{MNPTV} that
\[
\frac{\partial^*(x_iu)}{\partial^*(f)}=x_ig\in I.
\]
Hence, there exists a monomial $v_i\mid g$ such that $x_iv_i$ is a minimal generator of $I$. In particular,
\[
\deg(v_i)\le d-1.
\]
Now let $v$ be the product of the monomials $v_i$, where each $v_i$ corresponds to a variable appearing in $f$. Since $\deg_P(f)=t-1$, there are at most $t-1$ such variables, and hence
\[
fv\in I^{t-1}.
\]
We claim that
\[
I^t:(uv)=P.
\]
Indeed, let $x_i\in P$. Then
$ 
x_iuv=(x_ig)(fv)\in I^t,
$ 
which shows that
$
P\subseteq I^t:(uv).
$ 
By Lemma~\ref{lem_colon},
\[
I^t:(uv)=P.
\]

Since
$
\deg(uv)\le\deg(u)+(t-1)(d-1),
$
we conclude that
\[
v_P(I^t)\le v_P(I^{(t)})+(t-1)(d-1),
\]
as desired. When $G$ is a graph, every minimal generator of $I(G)$ has degree $2$. Hence, the corresponding inequality for edge ideals follows immediately.
\end{proof}

As a consequence, we obtain the following corollary.

\begin{corollary} Let $I$ be a squarefree monomial ideal, and let $d$ denote the maximum degree of a minimal generator of $I$. Then, for all $t\ge 1$,
\[
v(I^t)\le v(I^{(t)})+(t-1)(d-1).
\]
\end{corollary}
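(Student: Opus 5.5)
The corollary should follow from Theorem~\ref{prop_com2} by minimizing over associated primes, once we know that $I^t$ and $I^{(t)}$ have enough associated primes in common. The plan is to pick a prime $P$ realizing $v(I^{(t)})$ and show that $P$ is also a legitimate candidate for $v(I^t)$.

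First, every associated prime of $I^{(t)}=\bigcap_{Q\in\Ass(I)}Q^t$ lies in $\Min(I)$. Indeed, each $Q^t$ is $Q$-primary, so this intersection is a primary decomposition, and $\Ass(I^{(t)})=\Min(I)=\Ass(I)$. Hence $v(I^{(t)})=v_P(I^{(t)})$ for some $P\in\Ass(I)$, and there is a monomial $u$ of degree $v(I^{(t)})$ with $I^{(t)}:u=P$. It is harmless to take $u$ a monomial: the colon of a monomial ideal by a homogeneous polynomial that equals a monomial prime can be witnessed by a monomial of the same degree, as is standard for monomial ideals. The proofs above already work with monomials throughout.

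Second, I would rerun the argument of Theorem~\ref{prop_com2} on this $u$. It produces a monomial $uv$ with $I^t:(uv)=P$ and $\deg(uv)\le \deg(u)+(t-1)(d-1)$. In particular $P\in\Ass(I^t)$, since $P$ is realized as a colon ideal. Therefore
\[
v(I^t)\le v_P(I^t)\le v_P(I^{(t)})+(t-1)(d-1)=v(I^{(t)})+(t-1)(d-1).
\]

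The only step needing care is the bookkeeping that $P$ is an associated prime of $I^t$, so that $v_P(I^t)$ contributes to the minimum defining $v(I^t)$. This is automatic from the construction in Theorem~\ref{prop_com2}, which exhibits $P$ as a colon ideal of $I^t$. It also holds more directly because $\Min(I)\subseteq\Ass(I^t)$.
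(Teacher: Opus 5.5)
Your proposal is correct and is essentially the paper's own argument: $\Ass(I^{(t)})=\Ass(I)\subseteq\Ass(I^t)$, so you apply Theorem~\ref{prop_com2} at a prime $P$ realizing $v(I^{(t)})$ and then take the minimum. Your added remarks, that the witness may be taken to be a monomial and that $P$ lies in $\Ass(I^t)$, only spell out what the paper's one-line proof leaves implicit.
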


\begin{proof}
Since $\Ass(I^{(t)})=\Ass(I)$, the conclusion follows immediately from Theorem~\ref{prop_com2} and the definition of the $v$-number.
\end{proof}

For the second power of an edge ideal, we obtain a stronger inequality.

\begin{theorem}\label{lem_min_sym_2}
Let $G$ be a simple graph, and let $P$ be a minimal prime of $I(G)$. Assume that
\( I(G)^{(2)}:u=P.
\)
Then there exists a monomial $f$ such that $\deg(f)\le \deg(u)$ and $I(G)^2:f$ is an associated prime of $I(G)^2$. Consequently,
\[
v(I(G)^2)\le v(I(G)^{(2)}).
\]
\end{theorem}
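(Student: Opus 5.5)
The plan is a case analysis on the shape of $u$, using Lemmas~\ref{lem_deg}, \ref{lem_colon} and \ref{lem_criterion} together with the differential criterion \cite[Lemma~2.6]{MNPTV} already used in Theorem~\ref{prop_com2}. Write $I=I(G)$ and let $C\subseteq V(G)$ be the minimal vertex cover with $P=(x_i\mid i\in C)$. Put $W=V(G)\setminus C$; it is an independent set.

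\textbf{Setup.} By Lemma~\ref{lem_deg}, $\deg_P(u)=1$, so $u=x_ag$ with $a\in C$ and $\operatorname{supp}(g)\subseteq W$. For every $i\in C$ we have $x_iu\in I^{(2)}$. Applying \cite[Lemma~2.6]{MNPTV} with respect to $x_a$, exactly as in the proof of Theorem~\ref{prop_com2}, gives $x_ig\in I$. Hence every vertex of $C$, including $a$, has a neighbour in $\operatorname{supp}(g)$. Lemma~\ref{lem_criterion} also gives $\deg_Q(u)\ge 2$ for every other minimal cover $Q$. I will use this repeatedly with the cover $Q_b$, obtained from $(C\setminus\{a\})\cup\{b\}$ (or a minimal cover inside it), for suitable $b\in W$.

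\textbf{Step 1: the good case, $f=u$.} First try $f=u$. Since $u\notin P^2\supseteq I^2$, Lemma~\ref{lem_colon} (with $J=I^2$) shows that it suffices to prove $x_ix_ag\in I^2$ for all $i\in C$. Fix $i\in C$, a neighbour $b\in\operatorname{supp}(g)$ of $a$, and a neighbour $c\in\operatorname{supp}(g)$ of $i$.
\begin{itemize}
\item If $i=a$, or $c\neq b$, or $\deg_b(g)\ge2$, then $(x_ax_b)(x_ix_c)$ divides $x_ix_ag$, so $x_ix_ag\in I^2$.
\item Otherwise $a$ and $i$ have the single common neighbour $b$ in $\operatorname{supp}(g)$, and $x_b$ appears to the first power in $g$. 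Then $x_ix_ag\in I^2$ would require $a\sim i$ together with an edge from $\{a,i,b\}$ into $\operatorname{supp}(g/x_b)$, and such an edge does not exist.
\end{itemize}
So $f=u$ works, with $I^2:u=P$, unless some $i\in C$ falls into this last configuration.

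\textbf{Step 2: the bad configuration.} Now $N(a)\cap\operatorname{supp}(g)=\{b\}$ and $\deg_b(g)=1$. Consider $Q_b$. Since $\deg_{Q_b}(u)\ge2$ but $Q_b$ meets $u$ only in $x_b$ (exponent $1$) and in variables of $C\setminus\{a\}$ (exponent $0$), the set $(C\setminus\{a\})\cup\{b\}$ cannot itself be a cover. So some neighbour $w\neq b$ of $a$ lies in $W$, and $w\notin\operatorname{supp}(g)$. I will exploit this extra edge $aw$, together with the triangle $a,i,b$ (when $a\sim i$) or the path $a-b-i$ (otherwise). The first attempt is the degree-preserving exchange
\[
f=\frac{u}{x_b}\,x_w \quad\text{or}\quad f=x_i\frac{g}{x_b}\,x_w .
\]
For this $f$ I would either recheck the Step~1 condition with respect to a modified cover such as $(C\setminus\{a\})\cup\{b\}$, or show directly that $I^2:f$ is an embedded associated prime of $I^2$. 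In the triangle case the candidate is $(x_a,x_i,x_b)$ together with the neighbour variables, in the spirit of the known description of $\Ass(I(G)^2)$ via triangles. In either case $\deg f=\deg u$. If the exchange only removes one bad vertex, I would induct on a measure such as the number of bad pairs, or $\sum_{j}\deg_j(g)$ over vertices $j$ that are unique common neighbours.

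\textbf{Main obstacle and conclusion.} Step~2 is where I expect the difficulty: the exchange can create new bad vertices, so termination of the induction and the verification that $I^2:f$ is prime are the delicate points. Once the statement is established, the inequality follows at once. Take $P$ and $u$ realizing $v(I^{(2)})$ (recall $\Ass(I^{(2)})=\Min(I)$). The resulting $f$ satisfies $\deg f\le v(I^{(2)})$, and $I^2:f$ is an associated prime $\mathfrak p$ of $I^2$, so $v(I^2)\le v_{\mathfrak p}(I^2)\le\deg f\le v(I^{(2)})$.
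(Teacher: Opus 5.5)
\textbf{The gap.} Your Step~2 is not a proof. The exchange $f=x_wu/x_b$ is never checked, no decreasing measure is exhibited, and primality of $I^2:f$ is left open, as you acknowledge. As written, you have only handled the case in which the ``bad configuration'' does not occur. Your first bullet is also inaccurate for $i=a$: if $c=b$ and $\deg_b(g)=1$, then $(x_ax_b)^2\nmid x_a^2g$.

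\textbf{The missing idea.} The bad configuration is impossible, so Step~2 is unnecessary. You stopped one step short. Instead of $(C\setminus\{a\})\cup\{b\}$, use $(C\setminus\{a\})\cup(N(a)\cap W)$. This is always a vertex cover, because an edge avoiding $C\setminus\{a\}$ must be $\{a,y\}$ with $y\in W$.

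A minimal cover $Q$ inside it omits $a$, so $Q\neq C$, and Lemma~\ref{lem_criterion} gives $2\le\deg_Q(u)=\deg_Q(g)\le\sum_{y\in N(a)}\deg_y(g)$. Hence $x_bx_{b'}\mid g$ for some $b,b'\in N(a)$, not necessarily distinct.

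Now take $i\in C\setminus\{a\}$. By your Setup it has a neighbour $c\in\supp(g)$. Then $(x_ax_\beta)(x_ix_c)$ divides $x_ix_ag$, where $\beta=b$ if $c\neq b$ and $\beta=b'$ if $c=b$. For $i=a$, use $(x_ax_b)(x_ax_{b'})$ instead. Thus $P\subseteq I^2:u$, and Lemma~\ref{lem_colon} with $J=I^2$ gives $I^2:u=P$. So $f=u$ always works.

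\textbf{Comparison with the paper.} With this repair, your argument is correct and takes a genuinely different, stronger route. The paper picks $x_1\in P$ with $x_1\notin I^2:u$ and notes that the responsible generator of $I^{(2)}$ must be a triangle $x_1x_2x_3$. It then forces $u=x_2x_3^2v$ and swaps to $f=x_1x_2x_3v$. The colon $P+(x_j\mid j\in N[\{1,2,3\}])$ is an embedded prime of $I^2$.

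Your version instead shows that $I(G)^{(2)}:u=P$ implies $I(G)^2:u=P$. Together with Proposition~\ref{prop_com1}, this gives $v_P(I(G)^2)=v_P(I(G)^{(2)})$ for every minimal prime $P$, with no embedded primes needed. It also shows that the paper's failure case is vacuous: from $u=x_2x_3^2v$ one already has $(x_1x_3)(x_2x_3)\mid x_1u$, contradicting $x_1\notin I^2:u$.
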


\begin{proof}
By \cite[Lemma 3.1]{MNPTV}, we have
\[
\sqrt{I^2:u}=\sqrt{I^{(2)}:u}=P.
\]
If $I^2:u=P$, then there is nothing to prove. Hence, we may assume that $I^2:u\neq P$. This implies that there exists a variable $x_1\in P$ such that
\[
x_1\in I^{(2)}:u
\quad\text{but}\quad
x_1\notin I^2:u.
\]
Equivalently, there exists a minimal generator $x_1x_2x_3$ of $I^{(2)}$ such that
$ 
\displaystyle x_1=\frac{x_1x_2x_3}{\gcd(x_1x_2x_3,u)}.
$ 
In particular,
$ 
u=x_2x_3g
$ 
for some monomial $g$ such that $x_1\nmid g$. Since $P$ corresponds to a minimal vertex cover of $G$, at least one of $x_2$ and $x_3$ belongs to $P$. Without loss of generality, we may assume that $x_2\in P$.

It follows that there exists a minimal generator $h$ of $I^{(2)}$ such that
$
\displaystyle x_2=\frac{h}{\gcd(h,u)}.
$ 
Since $u=x_2x_3g$, we must have $x_2^2\mid h$. Thus,
$ 
h=x_2ax_2b
$ 
for some variables $a$ and $b$. We claim that both $a$ and $b$ must be equal to $x_3$. Indeed, suppose that one of them, say $a$, is not equal to $x_3$. Then $a\mid u$. Consequently, $u$ is divisible by $ax_2x_3$, and hence $x_1u$ is divisible by
$ 
(x_2a)(x_1x_3).
$ 
Therefore,
$ 
x_1\in I^2:u,
$
which is a contradiction. Hence, $a=b=x_3$, and consequently,
$
u=x_2x_3^2v,
$
where $g=x_3v$. We next claim that
\[
N(\supp(v))\cap\{1,2,3\}=\emptyset.
\]
Indeed, if $1\in N(\supp(v))$, then $x_1u\in I^2$, contradicting the choice of $x_1$. Likewise, if $2\in N(\supp(v))$, then $x_1u\in I^2$, since it is divisible by
$ 
(x_1x_3)(x_2v).
$
Finally, if $3\in N(\supp(v))$, then $u\in I^2$, again a contradiction. Hence,
\[
N(\supp(v))\cap\{1,2,3\}=\emptyset.
\]
Now let
$ 
f=x_1x_2x_3v.
$ 
Then $f\notin I^2$. We claim that
\begin{equation}\label{eq_colon_triangle}    
I^2:(x_1x_2x_3)
=
I+(x_j\mid j\in N[\{1,2,3\}]).
\end{equation}
The inclusion
\[
I+(x_j\mid j\in N[\{1,2,3\}])
\subseteq I^2:(x_1x_2x_3)
\]
is clear. For the reverse inclusion, let $w$ be a monomial whose
support is disjoint from $N[\{1,2,3\}]$, and suppose that
\[
wx_1x_2x_3\in I^2.
\]
Since the support of $w$ is disjoint from $N[\{1,2,3\}]$, there are no
edges between $\operatorname{supp}(w)$ and $\{1,2,3\}$. Since
$wx_1x_2x_3$ is divisible by a product of two edges, at least one of
these two edges must be supported entirely on $\operatorname{supp}(w)$.
Consequently, $w$ is divisible by an edge of $G$, and hence
\(
w\in I.
\)
Therefore, Eq.~\eqref{eq_colon_triangle} follows. Consequently,
\[
\begin{aligned}
I^2:f
&=
\bigl(I^2:(x_1x_2x_3)\bigr):v\\
&=
\bigl(I+(x_j\mid j\in N[\{1,2,3\}])\bigr):v\\
&=
P+(x_j\mid j\in N[\{1,2,3\}]),
\end{aligned}
\]
where the last equality follows from the fact that
$v$ has support disjoint from $N[\{1,2,3\}]$.

In particular, $I^2:f$ is generated by variables and therefore is an associated prime of $I^2$. Since
$ 
\deg(f)\le\deg(u),
$ 
the conclusion follows.
\end{proof}


It is easy to see that the inequality in the above theorem can hold as an equality; for example, take $G = K_3$. At the same time, Theorem~\ref{thm:bigger} yields a class of graphs for which $v(I(G)^{(2)}) > v(I(G)^2)$. In light of these facts, we pose the following question:

\begin{question}
Classify all graphs $G$ such that $v(I(G)^{(2)}) = v(I(G)^2)$.
\end{question}

It is worth noting that Theorem~\ref{lem_min_sym_2} does not generalize to arbitrary squarefree monomial ideals, as shown in the following example:

\begin{example}
Let $R = \mathbb{Q}[x_1, \ldots, x_7]$ and 
\[
I = (x_1x_3, x_1x_5x_7, x_4x_6x_7, x_3x_4x_7, x_2x_3x_5x_6, x_1x_2x_4x_5) \subset R.
\]
Using the package \texttt{VNumber} in \textsf{Macaulay2}~\cite{M2}, one can verify that $v(I^{(2)}) = 4 < 5 = v(I^2)$.
\end{example}

\section{v-number of powers of edge ideals}\label{sec_exm}

In this section, we provide examples of graphs for which, for a fixed exponent $t\ge2$, the~difference
\[
v(I(G)^{(t)})-v(I(G)^t)
\]
can be made arbitrarily large as the number of variables increases. We then compute explicit formulas for the $v$-numbers of the ordinary and symbolic powers of edge ideals of paths and~cycles.

\begin{theorem}\label{thm:bigger}
Let $t\ge2$ be an integer. Then, for every integer $s\ge0$, there exists a graph $G$ such that
\[
v(I(G)^{(t)})-v(I(G)^t)\ge s-t.
\]
\end{theorem}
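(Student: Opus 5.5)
The plan is to exhibit a graph whose \emph{ordinary} powers have a cheap embedded associated prime coming from a triangle, while every \emph{minimal} prime is expensive for the symbolic power because of many independent ``swaps'' in Lemma~\ref{lem_criterion}. Since Proposition~\ref{prop_com1} shows the minimal primes cannot help the ordinary power beat the symbolic power, the gain has to come from an embedded prime of $I^t$.

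\textbf{Construction.} Take a central triangle on $x_1,x_2,x_3$. Take three families $\mathcal A,\mathcal B,\mathcal C$, each consisting of $m$ further vertex-disjoint triangles, where $m$ will be chosen in terms of $s$ and $t$. Join $x_1$ to every vertex of every triangle in $\mathcal A$, $x_2$ to every vertex of every triangle in $\mathcal B$, and $x_3$ to every vertex of every triangle in $\mathcal C$. Then $N[\{1,2,3\}]=V(G)$.

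\textbf{Upper bound for $v(I^t)$.} Put $u=(x_1x_2)^{t-2}\,x_1x_2x_3$, so $\deg u=2t-1<2t$ and hence $u\notin I^t$.
\begin{enumerate}
\item For $x_1$: $x_1u=(x_1x_2)^{t-1}(x_1x_3)\in I^t$, and symmetrically $x_2u,x_3u\in I^t$.
\item For a neighbour $y$ of $x_1$: $yu=(x_1x_2)^{t-2}(x_2x_3)(x_1y)\in I^t$. Neighbours of $x_2$ and $x_3$ are handled the same way.
\item Since every vertex lies in $N[\{1,2,3\}]$, the ideal $I^t:u$ contains $\mathfrak m$ and is proper. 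Hence $I^t:u=\mathfrak m$, which is therefore associated to $I^t$.
\end{enumerate}
This gives $v(I^t)\le 2t-1$, independently of $m$.

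\textbf{Lower bound for $v(I^{(t)})$.} We use Lemma~\ref{lem_criterion}. Let $P$ be any minimal vertex cover and $u=\prod x_i^{a_i}$ a monomial with $I^{(t)}:u=P$. Then $\deg_P(u)=t-1$ and $\deg_Q(u)\ge t$ for every other minimal cover $Q$.
\begin{enumerate}
\item $P$ excludes at most one of $x_1,x_2,x_3$. So there are two families, say $\mathcal B$ and $\mathcal C$ when $x_1\notin P$, whose attaching central vertex lies in $P$.
\item Fix a triangle $T=\{y,z,w\}$ in one of those two families. $P$ contains at least two of its vertices, since $T$ is a triangle. If $P$ contains exactly two, say $y,z$ with $w\notin P$, then $Q=(P\setminus\{y\})\cup\{w\}$ should again be a minimal vertex cover. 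The reason is that the only neighbours of $y$ and $w$ outside $T$ are central vertices already in $P$.
\item The inequalities $\deg_Q(u)\ge t>\deg_P(u)$ then give $a_w\ge a_y+1\ge 1$. If instead $P\supseteq T$, minimality fails for the same reason, so this case does not occur.
\item Each of the $2m$ such triangles therefore contributes a distinct variable outside $P$ with positive exponent. This yields $\deg u\ge (t-1)+2m$.
\end{enumerate}
Choosing $m\ge s$ gives $v(I^{(t)})\ge t-1+2s$, so $v(I^{(t)})-v(I^t)\ge 2s-t\ge s-t$.

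\textbf{Main obstacle.} The delicate step is the case analysis over minimal vertex covers. I must check that the swap $Q$ is genuinely a \emph{minimal} cover different from $P$. I must also rule out covers containing all three central vertices, or all three vertices of an outer triangle, which could allow $u$ to be cheap; this is exactly why the construction attaches the outer triangles to \emph{different} central vertices. In a first attempt with everything attached to $x_1$, the cover avoiding $x_1$ admits the cheap witness $x_1^tx_3^{t-1}$, so I would verify this point carefully first. If minimality of a cover containing all three $x_i$ causes trouble, I would first note that it still excludes at least one vertex from each outer triangle and still admits swaps in every family. Failing that, I would enlarge the families so that at least $m$ swappable triangles survive in every case.
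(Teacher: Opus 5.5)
Your argument is correct. It follows the same two-part strategy as the paper, but with a different graph. The first part is an embedded prime $\mathfrak m$ of $I^t$, witnessed in degree $2t-1$ by an odd cycle whose closed neighbourhood is all of $V(G)$. The second part is a supply of independent swap gadgets which, via Lemma~\ref{lem_criterion}, force one variable outside $P$ per gadget.

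The paper uses the odd cycle $C_{2t-1}$, with witness $x_1\cdots x_{2t-1}$, and hangs $s$ four-cycles $x_i\,y_{i,2j-1}\,y_{i,2j}\,x_{i+1}$ on every cycle edge. A minimal cover must contain both ends of some cycle edge. Swapping the two $y$'s on that edge gives a vertex cover $Q$ that need not be minimal. The paper sidesteps this by using $I^{(t)}\subseteq Q^t$ for every prime $Q\supseteq I$, rather than the criterion over minimal primes.

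You use a triangle core, with witness $(x_1x_2)^{t-1}x_3$, and pendant triangles fully joined to one central vertex. Here all vertices of a pendant triangle have the same closed neighbourhood. So the transposition $y\leftrightarrow w$ is a graph automorphism carrying $P$ to $Q$, and minimality of $Q$ is automatic. This is the cleanest justification of your step 2, and it makes the contingency plans in your last paragraph unnecessary.

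What each choice buys:
\begin{itemize}
\item Your graph does not depend on $t$, and it gives the slightly stronger gap $2m-t$.
\item The paper chooses an odd cycle of length $2t-1$ so that $I(G)^{(k)}=I(G)^k$ for all $k<t$. The subsequent theorem on the shape of the $v$-function reuses exactly this property.
\item Your graph contains a triangle, so already $I^{(2)}\neq I^2$ (e.g.\ $x_1x_2x_3\in I^{(2)}\setminus I^2$). It would therefore not serve that later purpose.
\end{itemize}
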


\begin{proof}
Let $G$ be the graph with vertex set
\[
V(G)=\{x_1,\ldots,x_{2t-1}\}\cup
\{y_{i,j}\mid i=1,\ldots,2t-1,\; j=1,\ldots,2s\},
\]
whose edges consist of the odd cycle $C_{2t-1}$ on $x_1,\ldots,x_{2t-1}$ together with the $4$-cycles
\[
x_i\,y_{i,2j-1}\,y_{i,2j}\,x_{i+1}
\]
for every $i=1,\ldots,2t-1$ and every $j=1,\ldots,s$, where we identify $x_{2t}$ with $x_1$. We denote by $I = I(G)$ the edge ideal of $G$. Observe that
$ 
I^t:(x_1\cdots x_{2t-1})=\mathfrak m.
$ 
Hence,
\[
v(I^t)\le 2t-1.
\]

Let $P$ be a minimal prime of $I$. We will prove that
\[
v_P(I^{(t)})\ge t+s-1.
\]
Let $f$ be a monomial satisfying
\( I^{(t)}:f=P.
\) Then
$ 
\deg_P(f)=t-1.
$ 
Note that $P$ corresponds to a minimal vertex cover of $G$, and hence covers the odd cycle $C_{2t-1}$. Since $C_{2t-1}$ is an odd cycle, there exists an index $r$ such that
$ 
x_r,x_{r+1}\in P.
$ 
Consequently,
\[
|P\cap\{y_{r,2j-1},y_{r,2j}\}|=1
\]
for every $j=1,\ldots,s$. Let $u_j$ denote the unique variable in this intersection, and let $v_j$
denote the other variable. Fix such a $j$, and let $Q$ be the prime ideal
obtained from $P$ by replacing $u_j$ with $v_j$. Then $Q$ corresponds to a
vertex cover of $G$, and hence
$ 
I\subseteq Q.
$ 
The vertex cover corresponding to $Q$ need not be minimal. Moreover,
$ 
u_jf\in I^{(t)}\subseteq Q^t.
$ 
Since $u_j\notin Q$, it follows that $f\in Q^t$. Therefore,
\[
\deg_Q(f)
=\deg_P(f)+\deg_{v_j}(f)-\deg_{u_j}(f)
\ge t.
\]
Since $\deg_P(f)=t-1$, we obtain
$ 
\deg_{v_j}(f)\ge1.
$ 
Hence, $v_j$ divides $f$ for every $j=1,\ldots,s$. Thus, $f$ is divisible by $s$ variables outside $P$. Since
$ 
\deg_P(f)=t-1,
$ 
we conclude that
\[
\deg(f)\ge (t-1)+s=t+s-1.
\]
Therefore,
\[
v_P(I^{(t)})\ge t+s-1,
\]
and hence
\[
v(I^{(t)})-v(I^t)\ge s-t,
\]
as desired.
\end{proof}

The construction in Theorem~\ref{thm:bigger} shows that the initial behavior of the $v$-function for powers of ideals can be quite arbitrary, even for edge ideals of graphs. More precisely, we have the following result:

\begin{theorem}
    For any given $k > 1$, there exists a graph $G$ such that 
    \[
    v(I(G)) < v(I(G)^2) < \dots < v(I(G)^{k-1}) > v(I(G)^k),
    \]
    and the $v$-function stabilizes at $k$. In other words, there exists a graph $G$ for which the $v$-function increases up to $k-1$, drops strictly at $k$, and then stabilizes.
\end{theorem}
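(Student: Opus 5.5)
We use the graph $G$ from the proof of Theorem~\ref{thm:bigger} with $t=k$. It consists of an odd cycle $C_{2k-1}$ on $x_1,\ldots,x_{2k-1}$, together with $s$ four-cycles $x_i\,y_{i,2j-1}\,y_{i,2j}\,x_{i+1}$ attached along each edge of $C_{2k-1}$. The parameter $s$ will be chosen large, say $s\ge k+2$. The plan has three parts: compute $v(I^t)$ for $t\ge k$, show that $I^t=I^{(t)}$ for $t\le k-1$ and bound $v(I^{k-1})$ from below, and prove strict monotonicity for symbolic powers.

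\emph{Stable range.} First, for every edge ideal and every $t$, we have $v(I^t)\ge 2t-1$. Indeed, if $I^t:f=P$ and $x\in P$, then $xf\in I^t$ is divisible by a product of $t$ edges, so $\deg(xf)\ge 2t$. Conversely, let $e=x_1x_2$ and $w=x_1\cdots x_{2k-1}$. Then $I^k:w=\mathfrak m$, since any variable times $w$ contains $k$ disjoint edges (the odd cycle plus the extra vertex), while $w\notin I^k$ for degree reasons. Multiplying by edges gives $I^t:(w\,e^{t-k})=\mathfrak m$ for $t\ge k$. Here $\supseteq$ is clear, and $w e^{t-k}\notin I^t$ because its degree is $2t-1$. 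Hence $v(I^t)=2t-1$ for all $t\ge k$, so the function is linear from $k$ on.

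\emph{Range $t\le k-1$.} Every odd cycle of $G$ has length at least $2k-1$: replacing a cycle edge by a path of length $3$ through a $4$-cycle only lengthens the cycle, and the $4$-cycles themselves are even. By the known characterization that $I(G)^t=I(G)^{(t)}$ if and only if $G$ has no odd cycle of length at most $2t-1$, we get $I^t=I^{(t)}$ for all $t\le k-1$. The argument in the proof of Theorem~\ref{thm:bigger} works verbatim for any exponent $t$ in place of $k$, because it only uses that every minimal vertex cover contains two adjacent vertices $x_r,x_{r+1}$ of the odd cycle. It therefore gives $v(I^{(t)})\ge t+s-1$ for every $t$. In particular,
\[
v(I^{k-1})\ge k+s-2>2k-1=v(I^k)
\]
once $s\ge k+2$.

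\emph{Monotonicity of symbolic powers.} We show $v(I^{(t)})<v(I^{(t+1)})$ for every squarefree monomial ideal. Suppose $I^{(t+1)}:u=P$. By Lemma~\ref{lem_criterion}, $\deg_P(u)=t$ and $\deg_Q(u)\ge t+1$ for all associated primes $Q\neq P$. Choose a variable $x\in P$ dividing $u$, which exists since $t\ge1$. Then $u/x$ satisfies $\deg_P(u/x)=t-1$ and $\deg_Q(u/x)\ge t$, so by Lemma~\ref{lem_criterion} again $I^{(t)}:(u/x)=P$. Thus $v_P(I^{(t)})\le v_P(I^{(t+1)})-1$ for each $P$. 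Taking minima gives the strict increase, and combined with $I^t=I^{(t)}$ this yields $v(I)<v(I^2)<\cdots<v(I^{k-1})$.

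\emph{Main obstacle.} The delicate step is justifying $I^t=I^{(t)}$ for $t<k$, which is precisely what makes the first $k-1$ values inherit the large lower bound from the symbolic powers. I would invoke the standard odd-cycle criterion and verify carefully that no short odd cycle is created by combining the attached $4$-cycles with $C_{2k-1}$. Every other step is a direct consequence of Lemma~\ref{lem_criterion} and the degree count above.
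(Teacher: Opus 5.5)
Your proof is correct and follows the paper's argument. Both use the graph from Theorem~\ref{thm:bigger}, get $I(G)^t=I(G)^{(t)}$ for $t<k$ from the odd girth $2k-1$, use that $v(I(G)^{(t)})$ is strictly increasing, and use $v(I(G)^t)=2t-1$ for $t\ge k$. The differences are minor: you prove the strict monotonicity directly from Lemma~\ref{lem_criterion} by dividing $u$ by a variable of $P$ instead of citing \cite[Theorem 4.3]{YHC}, you verify $v(I(G)^t)=2t-1$ explicitly, and you get the drop from $v(I(G)^{k-1})\ge k+s-2$ rather than from $v(I(G))>2k$.
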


\begin{proof}
    Fix $k \ge 2$. Consider the graph defined in Theorem~\ref{thm:bigger} with $t = k$ and $s$ sufficiently large (for instance, $s > 3k$), so that $v(I(G)) > 2k$. Note that $G$ has a unique odd cycle of length $2k-1$. Consequently, $I(G)^{(t)} = I(G)^t$ for all $t < k$. By \cite[Theorem 4.3]{YHC}, the sequence $v(I(G)^{(t)})$ is strictly increasing. On the other hand, $v(I(G)^t) = 2t - 1$ for all $t \ge k$. This completes the proof.
\end{proof}

Note that the $v$-numbers of powers of edge ideals of complete graphs follow from work of Biswas and Mandal \cite{BM}. As a simple consequence of our methods, we obtain a formula for the $v$-number of symbolic powers of edge ideals of complete graphs.

\begin{theorem}
Let \(K_n\) be a complete graph on \(n\ge2\) vertices. Then, for every \(t\ge1\), we have
\[
v\bigl(I(K_n)^{(t)}\bigr)= 
t+
\left\lceil\frac{t-1}{n-1}\right\rceil.
\]
\end{theorem}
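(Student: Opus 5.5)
The plan is to use the integer linear programming formulation coming from Lemma~\ref{lem_criterion}. The minimal primes of $I(K_n)$ are the primes $P_k=(x_j\mid j\neq k)$ for $k=1,\ldots,n$, since the minimal vertex covers of $K_n$ are exactly the complements of single vertices. By symmetry all local $v$-numbers coincide, so it suffices to compute $v_{P_n}(I(K_n)^{(t)})$. Write $u=x_1^{a_1}\cdots x_n^{a_n}$ and $A=a_1+\cdots+a_n$. Lemma~\ref{lem_criterion} turns the condition $I^{(t)}:u=P_n$ into
\[
A-a_n=t-1 \quad\text{and}\quad A-a_k\ge t \ \text{ for all } k\neq n.
\]
Thus $v_{P_n}(I^{(t)})=\min\,(t-1+a_n)$ over all $\mathbf a\in\mathbb N^n$ satisfying these constraints.

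Substituting $A=t-1+a_n$ into the second family of constraints gives $a_k\le a_n-1$ for every $k\le n-1$. We also need $a_1+\cdots+a_{n-1}=t-1$. Such nonnegative integers exist if and only if $(n-1)(a_n-1)\ge t-1$, with the additional requirement $a_n\ge1$. When $t=1$, this requirement $a_n\ge 1$ forces the minimum to be $1$.

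So the minimal value of $a_n$ is $\max\bigl(1,\lceil (t-1)/(n-1)\rceil+1\bigr)=1+\lceil (t-1)/(n-1)\rceil$. The lower bound is immediate from the inequality above. For the upper bound, I will construct $\mathbf a$ explicitly by distributing $t-1$ among $a_1,\ldots,a_{n-1}$ as evenly as possible, so that each part is at most $\lceil (t-1)/(n-1)\rceil$.

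Adding the two contributions gives $v=t-1+1+\lceil (t-1)/(n-1)\rceil=t+\lceil (t-1)/(n-1)\rceil$.

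There is no serious obstacle here. The only points needing care are, first, correctly identifying the minimal primes, and second, checking the edge case $t=1$. At $t=1$ the formula gives $1$, which matches $v(I(K_n))=1$ for $n\ge2$. For instance, $u=x_n$ gives $I:x_n=P_n$.
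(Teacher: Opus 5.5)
Your proposal is correct and follows essentially the same route as the paper. Both arguments use Lemma~\ref{lem_criterion} to reduce to the constraints $A-a_n=t-1$ and $A-a_k\ge t$ for $k\neq n$. For the lower bound, your inequality $t-1=\sum_{k<n}a_k\le (n-1)(a_n-1)$ is equivalent to the paper's step of summing all $n$ constraints. For the upper bound, your evenly distributed exponent vector is exactly the paper's monomial $x_1^{q+1}\cdots x_r^{q+1}x_{r+1}^{q}\cdots x_{n-1}^{q}x_n^{q+2}$, where $t-1=(n-1)q+r$ with $1\le r\le n-1$.
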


\begin{proof}
For \(i=1,\ldots,n\), let
\[
P_i=(x_1,\ldots,\widehat{x_i},\ldots,x_n).
\]
Then \(P_1,\ldots,P_n\) are precisely the associated primes of \(I(K_n)\). First, we prove the upper bound. If \(t=1\), then
$
I(K_n):x_n=(x_1,\ldots,x_{n-1}),
$
and hence
$
v\bigl(I(K_n)\bigr)=1.
$

Now assume that \(t>1\). Write
$
t-1=(n-1)q+r
$
for some integers \(q,r\) with \(1\le r\le n-1\). Set
$
f=x_1^{q+1}\cdots x_r^{q+1}x_{r+1}^q\cdots x_{n-1}^q x_n^{q+2}.
$
Then
\[
\deg(f)
=(q+1)r+q(n-1-r)+q+2
=q(n-1)+r+q+2
=t+q+1.
\]
Moreover,
$
\deg_{P_n}(f)
=t+q+1-(q+2)
=t-1,
$ while
$
\deg_{P_i}(f)\ge t
$
for all \(i<n\). Therefore, by Lemma~\ref{lem_criterion},
$
I(K_n)^{(t)}:f=P_n.
$
It follows that
\[
v\bigl(I(K_n)^{(t)}\bigr)
\le t+q+1 = t+\left\lceil\frac{t-1}{n-1}\right\rceil.
\]

For the reverse inequality, let
$ 
f=x_1^{a_1}\cdots x_n^{a_n}
$ 
be a monomial such that
$ 
I(K_n)^{(t)}:f=P_n.
$ 
By Lemma~\ref{lem_criterion}, we have
$$
\deg_{P_n}(f)=t-1 \text{ and } \deg_{P_i}(f)\ge t
$$
for all \(i<n\). Let
$ 
d=a_1+\cdots+a_n=\deg(f).
$ 
These conditions are equivalent to
$ 
d-a_n=t-1\qquad \text{and} \qquad
d-a_i\ge t
$ 
for all \(i<n\). Summing the latter inequalities together with the equality for \(i=n\), we obtain
$ 
(n-1)d
\ge
(n-1)t+t-1.
$ 
Hence,
\[
d
\ge
t+\left\lceil\frac{t-1}{n-1}\right\rceil.
\]
The conclusion follows.
\end{proof}

We now turn to computing the $v$-numbers of ordinary and symbolic powers of edge ideals of paths and cycles. We first recall the following result due to Banerjee \cite{B}.

\begin{lemma}\label{lem_even}
Let \(G\) be a simple graph, and let \(I(G)\) denote its edge ideal. Let \(e_1,\ldots,e_{t-1}\) be edges of \(G\). Then
\[
I(G)^t:(e_1\cdots e_{t-1})
\]
is generated by monomials of degree two. Moreover, if \(uv\in I(G)^t:(e_1\cdots e_{t-1})\) then there exists an even walk
\[
x_0\cdots x_{2s+1}
\]
such that \(u=x_0\), \(v=x_{2s+1}\), and \(x_{2i+1}x_{2i+2}\) is one of the edges \(e_1,\ldots,e_{t-1}\) for each \(i\).
\end{lemma}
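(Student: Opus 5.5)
We argue by a token-matching (alternating walk) argument; no induction on $t$ is needed. Set $M=e_1\cdots e_{t-1}$ and $J=I(G)^t:M$. First, $J$ contains no variable: for any variable $x$, $\deg(xM)=2t-1<2t$, while every monomial in $I(G)^t$ has degree at least $2t$. Since $J$ is a monomial ideal, both assertions of the lemma follow once we show the following. Every monomial $w\in J$ is divisible by some $uv$, where $u,v$ are joined by an even walk of the stated form. Such a $uv$ is automatically in $J$ (this is checked in the last step), so $J$ is generated by degree-two monomials of this type.

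So let $w$ be a monomial with $wM\in I(G)^t$. Write $wM=g_1\cdots g_t\,h$, where the $g_i$ are edge monomials and $h$ is a monomial. We model this with \emph{tokens}. Form the multiset of variable occurrences in $wM$, and label each occurrence as coming from $w$ or from a specific edge $e_j$. We can do this so that each $e_j$ owns exactly two tokens, one for each endpoint. Now fix an assignment of the $2t$ variable occurrences of $g_1\cdots g_t$ to distinct tokens. This is possible because $g_1\cdots g_t$ divides $wM$.

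On the set of tokens put two perfect-or-partial pairings. The \emph{$e$-pairing} joins the two tokens of each $e_j$. The \emph{$g$-pairing} joins the two tokens assigned to each $g_i$. Every token has degree at most one in each pairing, so the union is a disjoint union of alternating paths and even cycles. The tokens of $w$ carry no $e$-edge. In a cycle, and in any path ending with an $e$-edge at least at one end, the number of $g$-edges is at most the number of $e$-edges. The totals are $t$ $g$-edges against $t-1$ $e$-edges. Hence some component has strictly more $g$-edges than $e$-edges.

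Such a component must be a path that begins and ends with $g$-edges. Its two end tokens have no $e$-partner, so they are $w$-tokens. Reading off the variables gives a walk $x_0x_1\cdots x_{2s+1}$ in which each $x_{2i}x_{2i+1}$ is one of the $g$'s, hence an edge of $G$. Each $x_{2i+1}x_{2i+2}$ is one of the $e_j$, and the $e_j$ used are distinct. Moreover $u=x_0$ and $v=x_{2s+1}$ satisfy $uv\mid w$; this holds even when $u=v$, because the two end tokens are distinct occurrences.

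Finally we check that $uv\in J$. The product $uv\prod_{i=0}^{s-1}x_{2i+1}x_{2i+2}$ equals $\prod_{i=0}^{s}x_{2i}x_{2i+1}$, which is a product of $s+1$ edges. Multiplying by the remaining $t-1-s$ edges $e_j$ shows $uvM\in I(G)^t$.

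The main obstacle is the bookkeeping in the counting step. One must set up tokens so that repeated variables, including repeated edges among the $e_j$ and squares $u^2$, are handled correctly. One must also confirm that a component with an excess of $g$-edges really has both endpoints among the $w$-tokens. Once the pairing structure is set up, the rest is immediate.
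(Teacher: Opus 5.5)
The paper gives no proof of this lemma; it simply quotes it from Banerjee~\cite{B}. Your proof is correct and is a self-contained replacement for that citation. It is an alternating-path count on the union of the $e$-matching and the $g$-matching of the tokens of $wM$. The union splits into alternating paths and even cycles, including $2$-cycles when some $g_i$ coincides with some $e_j$. Since there are $t$ edges of one kind against $t-1$ of the other, some component must be a path with $g$-edges at both ends, and its end tokens must be $w$-tokens. This one count handles repeated $e_j$'s, the case $u=v$, and both assertions of the lemma, with no induction on $t$ and no case analysis.

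It also proves slightly more than the printed statement. Your walk uses each index $j$ at most once, which is Banerjee's multiplicity condition in the definition of even-connectedness. Combined with your last step, this gives the exact description of $J$ as the ideal generated by the $uv$ that admit such a walk. That is the form in which the lemma is used later, e.g.\ in the proof of Theorem~\ref{thm_power_paths}, where $J$ is treated as the edge ideal obtained by adjoining even connections.

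One wording point: your first paragraph says ``such a $uv$ is automatically in $J$'', but this holds only because of that distinctness, so say so there. For a walk of the form literally stated in the lemma, where an $e_j$ may be reused, the converse can fail. Take edges $up,pv,pq,qr,rs,sq$, with $e_1=pq$, $e_2=rs$ and $t=3$. The walk $u,p,q,r,s,q,p,v$ has the stated form, but $uv\,e_1e_2\notin I(G)^3$, because $u$ and $v$ both have $p$ as their only neighbour.
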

\begin{theorem}\label{thm_power_paths}
Let $P_n$ be a path on $n\ge2$ vertices. Then, for all $t\ge1$, we have
\[
v(I(P_n)^t)
= 2t-1+\max\left \{0,\left\lceil\frac{n-2t-3}{4}\right\rceil\right\}.
\]
\end{theorem}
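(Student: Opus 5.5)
The plan is to prove matching upper and lower bounds. Both rest on one reduction: for a monomial $u$ with $I^t:u=P$ for an associated prime $P$, one can arrange $u=e_1\cdots e_{t-1}\,h$, where the $e_i$ are edges of $P_n$ and $h$ is a monomial with $J:h=P$ for
\[
J=I(P_n)^t:(e_1\cdots e_{t-1}).
\]
This is the standard structural fact for edge ideals used in the asymptotic $v$-number results of Ficarra--Sgroi and Biswas--Mandal--Saha; the paper does not state it, so it has to be cited or reproved. By Lemma~\ref{lem_even}, $J$ is generated in degree two by the edges of $P_n$ together with the products $x_0x_{2s+1}$ coming from even walks through the chosen edges. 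These extra generators include squares $x^2$, which put variables into any prime containing $J$. So $J$ is the ideal of a graph with some looped vertices, and computing $v$ of $J$ reduces to a $v$-number computation on a smaller path. I will also use the case $t=1$, namely $v(I(P_m))=1+\max\{0,\lceil (m-5)/4\rceil\}$ (equivalently $\lceil m/4\rceil$ for $m\ge 2$). This is known and can be checked by choosing an independent set $A$ whose neighborhood $N(A)$ is a minimal vertex cover.

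\textbf{Upper bound.} Put all $t-1$ edges at one end of the path: take the edges $x_1x_2,x_2x_3,\ldots$, or the single edge $x_1x_2$ repeated, and choose whichever makes the even-walk generators cover the most vertices. Then add one extra vertex variable near the end of that block, so that the colon ideal contains every variable of an initial segment of about $2t+3$ vertices. In total this uses degree $2(t-1)+1=2t-1$. The remaining path $x_{2t+4},\ldots,x_n$ has $n-2t-3$ vertices. Multiplying by a minimum monomial witnessing the $t=1$ $v$-number of that subpath costs $\lceil (n-2t-3)/4\rceil$ more, and Lemma~\ref{lem_colon} (or a direct computation) shows the resulting colon is a prime. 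The exact placement must be tuned so that the boundary between the block and the subpath neither creates nor destroys needed generators. I would verify this on small cases such as $t=2$ and $n=7,8$ before writing it in general.

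\textbf{Lower bound.} Take $u=e_1\cdots e_{t-1}h$ with $J:h=P$. The total degree is $2(t-1)+\deg h$, and we must show
\[
\deg h\ge 1+\max\left\{0,\left\lceil\frac{n-2t-3}{4}\right\rceil\right\}.
\]
First, $\deg h\ge 1$, since $J\ne P$ by Lemma~\ref{lem_deg}: the $P$-degree must be exactly $t-1$. The key combinatorial claim is that every vertex outside the support of the $e_i$ and their neighbourhood contributes to $J$ only through ordinary edges. The vertices "absorbed" by $e_1\cdots e_{t-1}$ together with one variable of $h$ then form at most $2t+3$ vertices, arranged in at most a bounded number of intervals. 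The remaining part of $h$ then witnesses the $t=1$ problem on the leftover union of subpaths, and superadditivity of $\lceil m/4\rceil$ over disjoint subpaths gives the bound.

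I expect this lower bound to be the main obstacle. It requires controlling, for arbitrary (possibly scattered or repeated) choices of the edges $e_i$, how many vertices the even-walk generators can force into $P$. The first thing I would try is an exchange argument: sliding the $t-1$ edges into one contiguous block at an end of the path never decreases the absorbed region. After that, the count $2t+3$ can be read off directly.
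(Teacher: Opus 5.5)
\paragraph{The lower bound.} This is where the whole difficulty lies, and it is not proved; the mechanism you propose for it is also wrong for paths. Since $P_n$ is bipartite, every even walk $x_0\cdots x_{2s+1}$ in Lemma~\ref{lem_even} has odd length, so its endpoints lie in opposite parts. Hence no squares occur, and $J=I(P_n)^t:(e_1\cdots e_{t-1})$ is the edge ideal of a bipartite graph $H\supseteq P_n$. For example, $e_1=x_2x_3$ gives $J=I+(x_1x_4)$.

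So the $e_i$ by themselves force no variable into $P$: a variable $x_c$ lies in $J:g$ only because $x_cx_s\in J$ for some $x_s\mid g$. Your ``absorbed region of at most $2t+3$ vertices'' and the sliding/exchange argument therefore rest on nothing, and you give no proof of either.

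The missing idea is to organize the count around the variables of $g$:
\begin{enumerate}
\item For $s\in\supp(g)$, the $J$-neighbours of $x_s$ lie in an interval $B(s)$ with at most $2q+3$ vertices, where $q$ is the number of $e_i$ used.
\item Since $g\notin J$, no two variables of $g$ are joined by an even connection. This forces the maximal blocks to use pairwise disjoint sets of the $e_i$; Claim~3 of the paper is the key point.
\item Every $c\in C$ lies in some block, and vertices outside $C$ are pairwise non-adjacent. So blocks are separated by at most one vertex, with at most one uncovered vertex at each end.
\end{enumerate}
This gives $n\le 2(t-1)+4\deg(g)+1$ directly, however the edges are distributed among the blocks. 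No exchange argument and no reduction to the case $t=1$ is needed.

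\paragraph{The upper bound.} Your construction fails as specified, because both candidate edge configurations are too weak. The repeated edge $(x_1x_2)^{t-1}$ gives $J=I$, and the chain $x_1x_2,x_2x_3$ gives only $J=I+(x_1x_4)$. For instance, take $t=2$ and $n=7$. Choosing $e_1=x_1x_2$ gives $I^2:(x_1x_2h)=I:h$, which forces degree at least $2+v(I(P_7))=4$. But $I^2:(x_3x_4x_5)=(x_2,x_4,x_6)$, so $v(I^2)=3$.

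What is needed is $t-1$ edges at alternating positions, such as $x_3x_4,x_5x_6,\ldots,x_{2t-1}x_{2t}$, together with $x_{2t+1}$. Then even walks from $x_{2t+1}$ reach $x_2,x_4,\ldots,x_{2t}$.

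Two further corrections:
\begin{itemize}
\item For $m\ge 2$, $v(I(P_m))=\lceil (m-1)/4\rceil$, not $\lceil m/4\rceil$; for example, $I(P_5):x_3=(x_2,x_4)$. The value $\lceil m/4\rceil$ is the right cost on the leftover path only when its first vertex is required to lie in $P$. That boundary condition would have to be built into your gluing and into any superadditivity argument.
\item The colon ideal can never contain all variables of an initial segment, since it must be a minimal vertex cover.
\end{itemize}
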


\begin{proof}
First, we prove the upper bound. Let
\[
r=\max\left\{0,\left\lceil\frac{n-2t-3}{4}\right\rceil\right\}
\quad \text{and} \quad 
s=\min\left \{ t,\left\lfloor\frac{n}{2}\right\rfloor-1\right\}.
\]
If $n-2t\equiv 3 \pmod{4}$, set
\[
f=(x_3x_4)^{t-s+1}\cdots x_{2s-1}x_{2s}x_{2s+1}
x_{2s+5}\cdots x_{2s+1+4r}.
\]
Then we claim that
\[
I(P_n)^t:f=(x_2,x_4,\ldots,x_{n-1})=P.
\]
By definition,
\[
\deg(f)=2(t-s)+2s+r+1=2t-1+r,
\]
and
\[
\deg_P(f)=(t-s)+(s-1)=t-1.
\]
By Lemma~\ref{lem_colon}, it suffices to prove that
\[
x_{2j}\in I(P_n)^t:f
\]
for every $j=1,\ldots,\frac{n-1}{2}$.

Indeed,
\[
x_2f
=(x_2x_3)(x_3x_4)^{t-s+1}\cdots
(x_{2s}x_{2s+1})
(x_{2s+5}\cdots x_{2s+1+4r})\in I^t.
\]
For $2\le j\le s+1$, we have
\[
x_{2j}f
=(x_3x_4)^{t-s+1}\cdots
(x_{2j-1}x_{2j})
(x_{2j}x_{2j+1})\cdots
(x_{2s}x_{2s+1})
(x_{2s+5}\cdots x_{2s+1+4r})
\in I^t.
\]
For $s+1<j\le s+1+2r$, either $x_{2j-1}$ or $x_{2j+1}$ is one of
$ 
x_{2s+5},\ldots,x_{2s+1+4r},
$ 
and hence
\[
x_{2j}f\in I^t.
\]

Similarly, if
\( n-2t\equiv 0,1,2\pmod{4},
\) we set 
\[
f=(x_2x_3)^{t-s+1}(x_4x_5)\cdots
(x_{2s-2}x_{2s-1})x_{2s}
x_{2s+4}\cdots x_{2s+4r}.
\]
A similar argument shows that 
\[
I^t:f=(x_{2i-1}\mid 1\leq i \leq \frac{n+1}{2}).
\]

Since $I^t:f$ contains a variable, we have
$ 
\deg(f)\ge 2t-1.
$ 
Hence, it remains to prove the lower bound, and we may assume that
\(
n>2t+3.
\)
Let $f$ be a monomial such that
\[
I^t:f=P=(x_j\mid j\in C)
\]
for some $C\subseteq[n]$. In particular,
$ 
x_jf\in I^t
$ 
for every $j\in C$. Hence, $f\in I^{t-1}$. Thus, we may write
\[
f=e_1\cdots e_{t-1}g
\]
for some edges $e_i$ of $P_n$ and a monomial $g$.

We have
\[
I^t:f
= (I^t:e_1\cdots e_{t-1}):g = J:g,
\]
where
$ 
J=I^t:(e_1\cdots e_{t-1})
$ 
is the edge ideal of a simple graph $H$ obtained by adjoining even connections through $e_1,\ldots,e_{t-1}$.

For any $s\in\operatorname{supp}(g)$, let $L(s)$ and $R(s)$ denote the left and right extensions of $s$, respectively. More precisely,
\[
L(s)={i,\ldots,s},
\]
where $i$ is as small as possible such that $x_ix_s\in J$; that is, $x_i$ is the leftmost variable that can be reached from $x_s$ by an even connection through $e_1,\ldots,e_{t-1}$. Similarly,
\[
R(s)={s,\ldots,j},
\]
where $j$ is as large as possible such that $x_sx_j\in J$.

We denote by
\[
B(s)=L(s)\cup R(s)
\]
the block associated to $s$. Let $E_l(s)$ denote the set of edges among $e_1,\ldots,e_{t-1}$ that appear in the even connection from $s$ to the leftmost variable in $L(s)$, and let $E_r(s)$ denote the set of edges that appear in the even connection from $s$ to the rightmost variable in $R(s)$. Finally, set
\[
E(s)=E_l(s)\cup E_r(s).
\]

We claim the following.

\noindent\emph{Claim 1.} If $u<v$ are in $\operatorname{supp}(g)$ and
$ 
E_l(u)\cap E_l(v)\neq\emptyset,
$ 
then
$ 
B(u)\subseteq B(v).
$ 

\noindent\emph{Claim 2.} If $u<v$ are in $\operatorname{supp}(g)$ and
$ 
E_r(u)\cap E_r(v)\neq\emptyset,
$ 
then
$ 
B(v)\subseteq B(u).
$ 

\noindent\emph{Claim 3.} If $u<v$ are in $\operatorname{supp}(g)$, then
$
E_r(u)\cap E_l(v)=\emptyset.
$

It follows from these claims that the blocks $B(s)$ can be decomposed into maximal blocks
\[
B(s_1)\cup\cdots\cup B(s_m)
\]
such that the sets $E(s_j)$ are pairwise disjoint. We now prove the desired bound assuming these claims.

We may order the blocks $B(s_1),\ldots,B(s_m)$ lexicographically according to their initial points. Thus, write
$ 
B(s_i)=[a_i,b_i]
$ 
with $a_i<b_i$, and assume that
$ 
a_1\le\cdots\le a_m.
$ 
Since every element of $C$ is covered by some $B(s_i)$, we have
\[
a_{i+1}-b_i\le1,
\qquad
a_1-1\le1,
\qquad\text{and}\qquad
n-b_m\le1.
\]
Furthermore,
\[
|B(s_i)|=2q_i+3, 
\]
where $q_i$ is the number of edges in $E(s_i)$ used in $B(s_i)$. Thus,
\[
n
\le
2\sum_{i=1}^m q_i+3m+m+1
\le
2(t-1)+4\deg(g)+1.
\]
Hence,
\[
\deg(g)\ge
\left\lceil\frac{n-2t+1}{4}\right\rceil.
\]
Since
$ 
\deg(f)=2t-2+\deg(g),
$ 
this gives the desired lower bound. It remains only to prove Claims 1--3.

\noindent\emph{Proof of Claim 1.}
Assume that \(u<v\) are in \(\operatorname{supp}(g)\) and that
$ 
E_l(u)\cap E_l(v)\neq\emptyset.
$ 
Since any further left extension using one of the common edges is the same for both \(u\) and \(v\), we have
\[
L(u)\subseteq L(v).
\]
Note that if two vertices \(i\) and \(j\) are connected by an even connection, then (i-j) is odd, since the resulting graph is still bipartite with the usual parity bipartition. Thus, if \(u\) and \(v\) share an edge in their left extensions, then \(u\) and \(v\) have the same parity.

If the right extension of \(u\) does not pass \(v\), then clearly
\(
R(u)\subseteq L(v),
\)
and hence
\(
B(u)\subseteq B(v).
\)
On the other hand, if the right extension of (u) passes (v), then (v) can also be extended to the right using the same edges. Hence, the furthest reachable vertices to the right from \(u\) and \(v\) are the same. Therefore,
\[
B(u)\subseteq B(v).
\]

\noindent\emph{Proof of Claim 2.}
The proof is similar to that of Claim 1.

\noindent\emph{Proof of Claim 3.}
Assume, by contradiction, that \(u<v\) are in \(\operatorname{supp}(g)\) and
$ 
E_r(u)\cap E_l(v)\neq\emptyset.
$ 
In other words, there exists an edge \(e\) that can be reached from \(u\) by extending to the right and from \(v\) by extending to the left. Concatenating the edges in the right extension from \(u\) to \(e\) with the edges in the left extension from \(v\) to \(e\) gives an even connection from \(u\) to \(v\). Hence,
$ 
x_ux_v\in J.
$ 
It follows that
\(g\in J,
\)
which contradicts \(J:g=P\), since \(P\) is a proper ideal. Therefore,
\[
E_r(u)\cap E_l(v)=\emptyset.
\]
This proves Claim 3.
\end{proof}

\begin{theorem}
Let $C_n$ be a cycle on $n\ge 3$ vertices. Then, for all $t\ge1$, we have
\[
v(I(C_n)^t)=2t-1+\max\left\{0,\left\lceil\frac{n-2t-2}{4}\right\rceil\right\}.
\]
\end{theorem}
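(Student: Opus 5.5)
The plan follows the proof of Theorem~\ref{thm_power_paths}, adapted to the cyclic setting, and splits into an upper bound by explicit construction and a matching lower bound by a block-counting argument. Write the cycle as $x_1x_2\cdots x_nx_1$, put $r=\max\{0,\lceil (n-2t-2)/4\rceil\}$, and assume $n\ge 2t$. The remaining small cases should be settled separately: for $n$ odd and $2t\ge n+1$ the odd cycle gives $I^t:(x_1\cdots x_{n})\cdots=\m$, and a monomial of degree $2t-1$ can be built by padding with squared edges. The general lower bound $\deg(f)\ge 2t-1$ is immediate, since $I^t:f=P$ contains a variable, which forces $f\in I^{t-1}\setminus I^t$ and $xf\in I^t$.

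\textbf{Upper bound.} Take $t-1$ edges, possibly with repetition, placed consecutively along an arc of the cycle, for instance $(x_2x_3)^{t-s+1}(x_4x_5)\cdots$ exactly as in Theorem~\ref{thm_power_paths}. By Lemma~\ref{lem_even}, these produce even connections, and the even connections cover a block of $2\min(t-1,\cdot)+3$ consecutive vertices. Next, add isolated variables $x_{j}, x_{j+4},\dots$ spaced four apart around the rest of the cycle. Each such variable kills the two vertices at distance one from it, so that together they cover the alternate vertices of the remaining arc. The cycle saves one vertex compared with the path, because the two ends of the arc are adjacent to the block. This is why the path's $n-2t-3$ becomes $n-2t-2$ here.

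The target prime $P$ is an alternating-type minimal vertex cover. When $n$ is odd, $P$ must contain two consecutive vertices, and these should be placed at a junction. For this $P$ we verify that $\deg_P(f)=t-1$ and that $x_if\in I^t$ for every $x_i\in P$; Lemma~\ref{lem_colon} then gives $I^t:f=P$ with $\deg f=2t-1+r$. The construction has to be split according to $n-2t \bmod 4$ and to the parity of $n$, in the same way as the path case.

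\textbf{Lower bound.} Assume $n>2t+2$ and let $I^t:f=P$. Write $f=e_1\cdots e_{t-1}g$, so that $I^t:f=J:g$ with $J=I^t:(e_1\cdots e_{t-1})$ described by Lemma~\ref{lem_even}. For each $s\in\supp(g)$, define the block $B(s)$ of vertices reachable by even connections to the left and to the right, exactly as before. Because $n>2t+2$, the edges $e_i$ cannot wrap around the cycle, so each block is an arc of length $2q+3$, where $q$ is the number of edges it uses. Claims~1--3 transfer verbatim, since the cycle is locally a path; in particular Claim~3 still holds because an even connection between two support vertices would put $g$ in $J$. This yields maximal blocks with pairwise disjoint edge sets.

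The blocks, together with gaps of at most one vertex between consecutive blocks (now counted cyclically, so there are $m$ gaps rather than $m+1$), must cover $C$ and hence essentially all of the cycle. This gives
\[
n\le 2(t-1)+3m+m\le 2(t-1)+4\deg(g),
\]
so $\deg(g)\ge\lceil (n-2t+2)/4\rceil$ and therefore $\deg f\ge 2t-2+\deg g$, which matches the formula.

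The main obstacle is the cyclic counting of gaps. Blocks could in principle overlap across the wrap-around, and for odd $n$ the even-connection parity argument of Claim~1 fails globally. I would handle this by cutting the cycle at a vertex not in $P$ that lies in a gap, which reduces to the path case on $n-1$ vertices with both endpoints constrained, and then checking that the constants agree.
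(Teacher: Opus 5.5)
Your proposal is correct and follows the paper's proof essentially step for step. For the upper bound it uses the same construction: a run of $t-1$ consecutive edges, repeated when $n$ is small, activated by one variable of $g$, then further variables spaced four apart, split by $n-2t \bmod 4$. For the lower bound it uses the same factorization $f=e_1\cdots e_{t-1}g$, Banerjee's even connections, the block Claims~1--3, and the cyclic count with $m$ gaps giving $n\le 2(t-1)+4\deg(g)$. The cutting-at-a-gap fallback in your last paragraph is unnecessary: as the paper notes, $n\ge 2t+3$ already forces every block to be a proper arc of at most $2t+1$ vertices, so the path-case claims apply locally.
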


\begin{proof} First, we prove the upper bound. Let
\[
r = \max\left\{0, \left\lceil\frac{n-2t-2}{4}\right\rceil\right\}
\quad \text{and} \quad 
s = \min\left\{t, \left\lfloor\frac{n}{2}\right\rfloor - 1\right\}.
\]
If $n - 2t \equiv 3 \pmod 4$, set 
\[
f = (x_1x_2)^{t-s+1} (x_3x_4) \cdots (x_{2s-3} x_{2s-2}) \cdot g,
\]
where $g = x_{2s-1}x_{2s+3}\cdots x_{2s-1+4r-4} x_{2s-1+4r-2}$. Then
\[
\deg(f) = 2t - 1 + r.
\]
We claim that $I^t : f = P = (x_2, x_4, \ldots, x_{n-1}, x_n)$.

We have $\deg_P(f) = t - 1$. By Lemma~\ref{lem_colon}, it suffices to show that $x_n \in I^t : f$ and $x_{2j} \in I^t : f$ for all $j = 1, \ldots, \frac{n-1}{2}$. Indeed, for every $1 \le j \le s-1$, we have
\[
x_{2j}f = (x_1x_2)^{t-s+1} \cdots (x_{2j-1}x_{2j}) (x_{2j}x_{2j+1}) \cdots (x_{2s-2}x_{2s-1}) x_{2s+3} \cdots x_{2s-1+4r-4} x_{2s-1+4r-2} \in I^t.
\]
On the other hand, if $s \le j \le \frac{n-1}{2}$, then $x_{2j}g \in I$, and hence $x_{2j}f \in I^t$. Furthermore, 
\[
x_n f = (x_1x_2)^{t-s} (x_nx_1) (x_2x_3) \cdots (x_{2s-2}x_{2s-1}) x_{2s+3} \cdots x_{2s-1 + 4r-4} x_{2s-1 + 4r-2} \in I^t.
\]

If $n - 2t \equiv 0, 1, 2 \pmod 4$, set 
\[
f = (x_1x_2)^{t-s+1} (x_3x_4) \cdots (x_{2s-3} x_{2s-2}) \cdot g,
\]
where $g = x_{2s-1}x_{2s+3}\cdots x_{2s-1+4r}$. Then a similar argument shows that 
\[
I^t : f = 
\begin{cases} 
(x_2, x_4, \ldots, x_n) & \text{if } n \text{ is even,} \\ 
(x_2, x_4, \ldots, x_{n-1}, x_n) & \text{if } n \text{ is odd.} 
\end{cases}
\]

Thus, it remains to prove the lower bound. Since $I^t:f$ contains a variable, we have
\[
\deg(f)\ge 2t-1.
\]
Therefore, we may assume that
\( 
n>2t+2.
\) In this case, we have
\(
I^t=I^{(t)}.
\)
Hence, all associated primes of $I^t$ are minimal. Let
$ 
P=(x_i\mid i\in C)
$ 
be a minimal prime of $I(C_n)$, and let $f$ be a monomial satisfying
\(
I^t:f=P.
\) 
We need to prove that
\[
\deg(f)\ge 2t-1+r.
\]
The argument is similar to that of the proof of Theorem~\ref{thm_power_paths}. Since $x_j f \in I^t$ for every $j \in C$, we deduce that $f \in I^{t-1}$. Thus, we may write $f = e_1 \cdots e_{t-1} g$ for some edges $e_i$ of $C_n$ and a monomial $g$ of degree at least $1$. We have 
$$I^t : f = (I^t : e_1 \cdots e_{t-1}) : g = J : g,$$
where $J = I^t : (e_1 \cdots e_{t-1})$ is the edge ideal of a graph $H$ obtained by adjoining even connections through $e_1, \ldots, e_{t-1}$. 

As in the proof of Theorem~\ref{thm_power_paths}, for each $s\in\operatorname{supp}(g)$, let $L(s)$ and $R(s)$ denote the counterclockwise and clockwise extensions of $s$, respectively, and set $B(s)=L(s) \cup R(s)$. Since $B(s)$ contains at most $2q+3$ vertices, where $q$ is the number of edges among $e_1,\ldots,e_{t-1}$ used in the extensions defining $B(s)$, and since $n\ge 2t+3$, it follows that $B(s)$ is a proper interval of the cycle. Thus, locally, each of these blocks behaves similarly to the corresponding blocks in the path case. Consequently, the three claims from Theorem~\ref{thm_power_paths} carry over, allowing us to decompose the blocks $B(s)$ into maximal blocks, $B(s_1) \cup \cdots \cup B(s_m)$ such that the sets $E(s_j)$ are pairwise disjoint. 

We order the blocks $B(s_1), \ldots, B(s_m)$ in the clockwise direction, and write $B(s_i) = [a_i, b_i]$, where we identify a vertex $j$ with $j - n$ if $j > n$. Since every element of $C$ is covered by some $B(s_i)$, we obtain the inequalities 
$$a_{i+1} - b_i \le 1 \quad \text{and} \quad n + a_1 - b_m \le 1.$$ 
Furthermore, $|B(s_i)| = 2q_i + 3$, where $q_i$ is the number of edges in $E(s_i)$ used in $B(s_i)$. Thus,
$$n \le 2 \sum_{i=1}^m q_i + 3m + m \le 2(t-1) + 4 \deg(g).$$
Hence, $\deg(g) \ge \left\lceil \frac{n-2t+2}{4} \right\rceil$. Since $\deg(f) = 2t - 2 + \deg(g)$, the conclusion follows.
\end{proof}

\begin{theorem}
Let $I=I(C_{2m-1})$, where $m\geq 2$. Then, for every $t\geq m$,
\[
v\bigl(I^{(t)}\bigr)
=
\left\lceil\frac{(2m-1)t-1}{m}\right\rceil.
\]
Equivalently, if $t=km+r$, where $k\geq 0$ and $1\leq r\leq m$, then
\[
v\bigl(I^{(t)}\bigr)
=
\begin{cases}
k(2m-1)+2r, & 1\leq r\leq m-2,\\
k(2m-1)+2r-1, & m-1\leq r\leq m.
\end{cases}
\]
\end{theorem}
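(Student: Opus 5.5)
The plan is to use the integer linear programming description from Lemma~\ref{lem_criterion}. Label the vertices of $C_{2m-1}$ cyclically as $x_1,\dots,x_{2m-1}$. For $i=1,\dots,2m-1$, let $Q_i$ be the minimal vertex cover of size $m$ containing the adjacent pair $x_i,x_{i+1}$ and then every other vertex, namely $Q_i=(x_i,x_{i+1},x_{i+3},x_{i+5},\dots,x_{i-2})$. Its complement is a maximum independent set of size $m-1$. Each vertex lies in exactly $m$ of the $Q_i$.

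\emph{Reduction to the covers $Q_i$.} I would first show that for any minimal prime $P$, the condition $\deg_Q(u)\ge t$ for all associated primes $Q\ne P$ only needs to be checked for the $Q_i\neq P$. Every vertex cover of $C_{2m-1}$ contains one of the $Q_i$, because every independent set of an odd cycle extends to a maximum one of size $m-1$: fill the gaps between consecutive chosen vertices greedily. If a minimal cover $Q\neq P$ contained only $P$ among the $Q_i$, then $Q\supseteq P$, and minimality of $Q$ would force $Q=P$, which is impossible. Since $\deg_Q$ is monotone in $Q$, the reduction follows.

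\emph{Lower bound.} Let $I^{(t)}:f=P$. Summing $\deg_{Q_i}(f)$ over all $i$ gives
\[
\sum_{i=1}^{2m-1}\deg_{Q_i}(f)=m\deg(f).
\]
If $P=Q_j$ for some $j$, Lemma~\ref{lem_criterion} gives one term equal to $t-1$ and the remaining $2m-2$ terms each at least $t$. Hence $m\deg(f)\ge (2m-1)t-1$. If $P$ is a minimal cover of size larger than $m$, all $2m-1$ terms are at least $t$, which gives the even stronger bound $m\deg(f)\ge(2m-1)t$. In both cases $\deg(f)\ge\lceil((2m-1)t-1)/m\rceil$, and this holds for every $t$. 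The case split in the statement is just the arithmetic of this ceiling for $t=km+r$: the correction term $(r+1)/m$ is less than $1$ exactly when $r\le m-2$.

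\emph{Upper bound.} By the reduction step, it suffices to take $P=Q_1$ and exhibit an exponent vector $\mathbf a\in\mathbb N^{2m-1}$ with the following properties: the sum over $Q_1$ equals $t-1$, the sum over every other $Q_i$ is at least $t$, and the total is $D=\lceil((2m-1)t-1)/m\rceil$. Equivalently, writing $S_i$ for the complementary alternating independent set of $Q_i$, I need $\sum_{S_1}a=D-t+1$ and $\sum_{S_i}a\le D-t$ for all $i\ne1$. I would build $f$ multiplicatively. Write $t=km+r$ with $k\ge1$, which is possible since $t\ge m$, and $1\le r\le m$. Take a base monomial $h_r$ that solves the problem for $t'=m+r$ with the exact degree given by the formula. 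Then set $f=g^{\,k-1}h_r$ with $g=\prod_{i=1}^{2m-1} x_i^{c_i}$, where $g$ raises every $Q_i$-sum, including the one for $P$, by exactly $m$ and has degree $2m-1$. The all-ones vector $g=x_1\cdots x_{2m-1}$ works, since each $|Q_i|=m$. This shift matches the formula's increment of $2m-1$ when $t$ increases by $m$.

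\emph{Main obstacle.} The real work is constructing the $m$ base monomials $h_r$ for $t'=m+r$, $1\le r\le m$, with degree $2m-1+2r$ for $r\le m-2$ and $2m-2+2r$ for $r\in\{m-1,m\}$, and checking all $2m-1$ cover inequalities. This is where the rounding of the fractional optimum fails for $t<m$. I would start by concentrating the excess weight outside $P$ on the vertex $x_{2m-1}$ opposite the adjacent pair $x_1,x_2$, together with its neighbours. I would then spread the remaining weight along $P$ in an alternating pattern, and verify the sums over the $S_i$ by sliding $i$ around the cycle.
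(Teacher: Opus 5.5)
Your lower bound is correct and is the paper's argument: sum $\deg_{Q_i}(f)$ over the $2m-1$ minimum covers, use that each vertex lies in exactly $m$ of them, and apply Lemma~\ref{lem_criterion}. This step needs no reduction, since each $Q_i\neq P$ is itself an associated prime. The lifting step $f=g^{k-1}h_r$ with $g=x_1\cdots x_{2m-1}$ is also sound, because $g$ raises $\deg_Q$ by $|Q|\ge m$ for \emph{every} minimal cover $Q$; this is the paper's use of $c\in I^{(m)}$.

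\textbf{First gap: the reduction step is false once $m\ge5$.} Greedy filling cannot add anything to a gap of exactly two vertices. In $C_9$, the set $\{x_3,x_6,x_9\}$ is a maximal independent set of size $3<4$. Hence $Q=(x_1,x_2,x_4,x_5,x_7,x_8)$ is a minimal prime of $I(C_9)$ of height $6$ that contains none of the $Q_i$.

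This matters even for your fixed choice $P=Q_1=(x_1,x_2,x_4,x_6,x_8)$. The monomial $f=x_3x_5x_6x_9$ satisfies $\deg_{Q_1}(f)=1$ and $\deg_{Q_i}(f)\ge 2$ for all $i\neq1$. Yet $\deg_Q(f)=1$, so $x_6f\notin Q^2\supseteq I^{(2)}$ and $I^{(2)}:f\neq Q_1$. The power $f^4$ fails in the same way for $t=5=m$. So a verification that only checks the sums over the sets $S_i$ ``by sliding $i$ around the cycle'' cannot certify a witness. You would also have to control every non-minimum minimal cover.

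\textbf{Second gap: the base monomials $h_r$ are never constructed.} You correctly identify them as the real work and correctly predict their degrees, but without them the upper bound is not proved.

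\textbf{How the paper handles both issues.} It builds witnesses for \emph{ordinary} powers via explicit edge factorizations.
\begin{itemize}
\item Take $C=\{x_1,x_3,\dots,x_{2m-1}\}$, which is $Q_{2m-1}$ in your notation. The monomial $f_{m-1}=x_2x_3\cdots x_{2m-2}$ has $\deg_C(f_{m-1})=m-2$.
\item For every $x_i\in C$ we have $x_if_{m-1}\in I^{m-1}$; for example, $x_1f_{m-1}=(x_1x_2)(x_3x_4)\cdots(x_{2m-3}x_{2m-2})$. So $I^{m-1}:f_{m-1}=P_C$ by Lemma~\ref{lem_colon}.
\item Then $f_s=f_{m-1}(x_1x_2)^{s-m+1}$ has degree $2s-1$ and satisfies $I^s:f_s=P_C$.
\item Since $I^s\subseteq I^{(s)}$ and $c=x_1\cdots x_{2m-1}\in I^{(m)}$, set $u=c^{k-1}f_{m+r}$ when $1\le r\le m-2$, and $u=c^kf_r$ when $r\in\{m-1,m\}$. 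Then $x_iu\in I^{(t)}$ for all $x_i\in C$, and $\deg_C(u)=t-1$.
\end{itemize}
Membership in $Q^t$ for all minimal primes, minimum or not, comes for free this way. Up to rotation, these are exactly your $h_r=f_{m+r}$ and $h_r=cf_r$, with the degrees you predicted.
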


\begin{proof}
Set $n=2m-1$, and label the vertices of $C_n$ by
$x_1,\ldots,x_n$. Let $C_1,\ldots,C_n$ be the minimum vertex covers
of $C_n$. Each $C_i$ has cardinality $m$, and every vertex belongs
to exactly $m$ of these covers. Hence, for every monomial $u$,
\begin{align}
\sum_{i=1}^{n}\deg_{C_i}(u)=m\deg(u). \label{eq.111}
\end{align}

Suppose that $I^{(t)}:u=P_D$ for some minimal vertex cover $D$. By
Lemma \ref{lem_deg}, $\deg_D(u)=t-1$. If $C_i\neq D$, then
$D\setminus C_i\neq\emptyset$. Choose $x_j\in D\setminus C_i$.
Since $x_ju\in I^{(t)}\subseteq P_{C_i}^t$ and
$x_j\notin P_{C_i}$, it follows that $\deg_{C_i}(u)\geq t$.
Consequently, among the numbers $\deg_{C_i}(u)$, at most one is
equal to $t-1$, while all the others are at least $t$. By (\ref{eq.111}), $m\deg(u)\geq nt-1$, and therefore
\begin{align}
v\bigl(I^{(t)}\bigr) \geq \left\lceil\frac{nt-1}{m}\right\rceil.
\label{eq.Lowerbound}
\end{align}

We now construct monomials attaining this bound. Fix the minimum
vertex cover $C=\{x_1,x_3,\ldots,x_{2m-1}\}$ and set
\[
f_{m-1}
=
x_{2m-2}\prod_{j=1}^{m-2}x_{2j}x_{2j+1}.
\]
Then $\deg_C(f_{m-1})=m-2$. Moreover, $x_1f_{m-1}$ is divisible by
\[
(x_1x_2)(x_3x_4)\cdots(x_{2m-3}x_{2m-2}),
\]
and $x_{2m-1}f_{m-1}$ is divisible by
\[
(x_2x_3)(x_4x_5)\cdots(x_{2m-2}x_{2m-1}).
\]
For $1\leq q\leq m-2$, one has
\[x_{2q+1}f_{m-1}
=
\left(\prod_{j=1}^{q-1}x_{2j}x_{2j+1}\right)
(x_{2q}x_{2q+1})(x_{2q+1}x_{2q+2}) \cdot
\left(\prod_{j=q+1}^{m-2}x_{2j+1}x_{2j+2}\right).
\]
Thus $x_if_{m-1}\in I^{m-1}$ for every $x_i\in C$. It follows from Lemma \ref{lem_colon} that $I^{m-1}:f_{m-1}=P_C$.

For every $s\geq m-1$, define
$f_s=f_{m-1}(x_1x_2)^{s-m+1}$. Then $x_if_s\in I^s$ for every
$x_i\in C$, while $\deg_C(f_s)=s-1$. Hence, again by Lemma \ref{lem_colon},
\begin{align}
I^s:f_s=P_C. \label{eq.112}
\end{align}
Notice also that $\deg(f_s)=2s-1$.

Let $c=x_1\cdots x_n$. Since every minimal vertex cover of $C_n$ has at least $m$ vertices, $c\in I^{(m)}$.
Recall that $t=km+r$, where $1\leq r\leq m$. 

Suppose first that
$1\leq r\leq m-2$. Since $t\geq m$, we have $k\geq 1$. Set
$u=c^{k-1}f_{m+r}$. For every $x_i\in C$, (\ref{eq.112}) gives
$x_if_{m+r}\in I^{m+r}$, and hence $x_iu\in I^{(t)}$. Furthermore,
\[
\deg_C(u)=(k-1)m+(m+r-1)=t-1.
\]
Lemma \ref{lem_colon} therefore gives $I^{(t)}:u=P_C$, and
\begin{align}
\deg(u) = (k-1)(2m-1)+2(m+r)-1 = k(2m-1)+2r.
\label{eq.113}
\end{align}

Now suppose that $m-1\leq r\leq m$, and set $u=c^kf_r$. For every
$x_i\in C$, we have $x_if_r\in I^r$, so $x_iu\in I^{(t)}$.
Moreover, $\deg_C(u)=km+r-1=t-1$. Thus
$I^{(t)}:u=P_C$, and
\begin{align}
\deg(u)=k(2m-1)+2r-1.
\label{eq.114}
\end{align}

Finally, a direct computation gives
\[
\left\lceil\frac{(2m-1)t-1}{m}\right\rceil
=
\begin{cases}
k(2m-1)+2r, & 1\leq r\leq m-2,\\
k(2m-1)+2r-1, & m-1\leq r\leq m.
\end{cases}
\]
The upper bounds furnished by (\ref{eq.113}) and (\ref{eq.114}), together with the lower
bound (\ref{eq.Lowerbound}), complete the~proof.
\end{proof}


\end{document}